% ****** Start of file aipsamp.tex ******
%
%   This file is part of the AIP files in the AIP distribution for REVTeX 4.
%   Version 4.1 of REVTeX, October 2009
%
%   Copyright (c) 2009 American Institute of Physics.
%
%   See the AIP README file for restrictions and more information.
%
% TeX'ing this file requires that you have AMS-LaTeX 2.0 installed
% as well as the rest of the prerequisites for REVTeX 4.1
% 
% It also requires running BibTeX. The commands are as follows:
%
%  1)  latex  aipsamp
%  2)  bibtex aipsamp
%  3)  latex  aipsamp
%  4)  latex  aipsamp
%
% Use this file as a source of example code for your aip document.
% Use the file aiptemplate.tex as a template for your document.
\documentclass[%
aip,
% jmp,
% bmf,
% sd,
% rsi,
amsmath,amssymb,
%preprint,
reprint, onecolumn %%%%% Comment onecolumn to override one column format %%%%%%%
%author-year,%
%author-numerical,%
% Conference Proceedings
]{revtex4-1}
\usepackage{amsthm}
\usepackage{amssymb}
\usepackage{amsmath}
\usepackage{graphicx}% Include figure files
\usepackage{dcolumn}% Align table columns on decimal point
\usepackage{bm}% bold math
%\usepackage[mathlines]{lineno}% Enable numbering of text and display math
%\linenumbers\relax % Commence numbering lines
\usepackage{enumerate}
\usepackage[utf8]{inputenc}
\usepackage[T1]{fontenc}
\usepackage{mathptmx}
\usepackage{etoolbox}
\newtheorem{thm}{Theorem}[section]

\newtheorem{lem}[thm]{Lemma}
\newtheorem{prop}[thm]{Proposition}

\newtheorem{defn}[thm]{Definition}

\newcommand{\mb}{\mathbb}

\newcommand{\lk}{\left}
\newcommand{\re}{\right}

\def \a{\alpha} \def \b{\beta}  \def \d{\delta}
\def \t{\theta} \def \T{\Theta}  \def \e{\epsilon} 
   \def \o{\omega}
\def \O{\Omega}

%% Apr 2021: AIP requests that the corresponding 
%% email to be moved after the affiliations
\makeatletter
\def\@email#1#2{%
	\endgroup
	\patchcmd{\titleblock@produce}
	{\frontmatter@RRAPformat}
	{\frontmatter@RRAPformat{\produce@RRAP{*#1\href{mailto:#2}{#2}}}\frontmatter@RRAPformat}
	{}{}
}%
\makeatother
\begin{document}
	
	\preprint{AIP/123-QED}
	
	\title[Directional Pinsker algebra and its applications]{Directional Pinsker algebra and its applications}
	% Force line breaks with \\
	%	\author{Chunlin Liu}
	%		\email{lcl666@mail.ustc.edu.cn}
	%	\altaffiliation[Also at ]{Physics Department, XYZ University.}%Lines break automatically or can be forced with \\
	\author{Chunlin Liu}

	\author{Leiye Xu}
	\affiliation{CAS Wu Wen-Tsun Key Laboratory of Mathematics, School of Mathematical Sciences, University of Science and Technology of China, Hefei, Anhui, 230026, PR China
	}%
	
	\email{lcl666@mail.ustc.edu.cn,leoasa@mail.ustc.edu.cn}	\date{\today}% It is always \today, today,
	%  but any date may be explicitly specified
	
	\begin{abstract}
		In this paper, we introduce the directional Pinsker algebra, and construct a skew product to study it.  As applications, 
		we show that 
		\begin{enumerate}[(i)]
			\item 	if a $\mathbb{Z}^2$-system  with positive directional measure-theoretic entropy  then it is multivariant directional mean Li-Yorke chaotic along the corresponding direction;
			
			\item  for any ergodic   measure on a $\mathbb{Z}^2$-system, the intersection of the set of directional measure-theoretic entropy tuples with the set of directional asymptotic tuples is dense in the set of directional measure-theoretic entropy tuples. 
		\end{enumerate}
	\end{abstract}
	
	\maketitle
	\section{Introduction}
	Cellular automaton constitute a collection of discrete computational models that define their evolution through neighborhood rules. They have many applications in simulating various complex systems in physics and science, and also have been explored in the context of statistical mechanics, primarily for investigating phase transitions \cite{li1990transition} and the Ising model \cite{vichniac1984simulating}. Overall, cellular automaton have been used to model physical systems. In particular, the thermodynamics of cellular automaton has already captured the attention of numerous scholars. For instance, to investigate the cellular automaton map together with the Bernoulli shift, Milnor \cite{M1,M2} defined directional entropy. After that, Park \cite{Park1}  showed that the directional entropy is continuous for a $\mathbb{Z}^2$-action generated by a cellular automaton. In fact, Park proved this result holds on more general systems.
	With the deepening of research, mathematicians
	are not only focused on the directional entropy of cellular automaton but on more general systems as well (see \cite{MR3551897,MR3681987,LXETDS,MR1355676,MR2755932,MR3759528}).
	
	It is well known that for a measure theoretical dynamical system under group actions,  there exists the largest sub-$\sigma$-algebra, called Pinsker algebra, with zero entropy. It is an indispensable tool in the study of entropy. However, there is no a similar conception for directional dynamical systems. 
	In this paper, we introduce directional Pinsker algebra.
	Moreover, we construct a skew product system in terms of this measure-theoretical system and some direction, and show that the directional entropy of original system is equal to the entropy  and fiber entropy of this skew product system (\textbf{see Theorem \ref{lem1}}). Moreover, we establish the relation between directional Pinsker algebra and the Pinsker algebra of this skew product system  (\textbf{see Theorem \ref{thm2}}). These are our main theorem in this paper. Based the second consequence, many results of classical Pinsker algebra under the group actions can be extended to directional Pinsker algebra, which  overcomes the difficulty that a  dynamical system from a direction viewpoint is not  a dynamical system under group action. %We will demonstrate the application of our main result by proving a directional version of the Rohlin-Sinai theorem (\textbf{see Example \ref{ex:RS}}).
	
	\smallskip
	As a form of chaos, positive entropy has some  connections with other chaotic behaviors. In particular, Blanchard, Glasner, Kolyada, and Maass \cite{MR1900136} proved that  positive entropy implies Li-Yorke chaos.  Downarowicz \cite{MR3119189} observed that mean Li-Yorke chaos is equivalent to  DC2 chaos and proved that positive entropy implies mean Li-Yorke chaos. Recently, Huang, Li and Ye \cite{MR4397149} showed that  positive entropy implies Li-Yorke chaos along any infinite sequence. The more results on Li-Yorke chaos can be seen \cite{MR3570021,MR2317754,LQ,liu2022pinsker}.  The reader also can refer to the survey \cite{MR3431162} for more details about chaos. From the results above-mentioned, chaotic behavior usually occurs in the systems with positive entropy. It is easy to construct a  $\mathbb{Z}^2$-system with zero entropy but positive directional entropy. Thus,  there is a natural question:
	is there  Li-Yorke chaos phenomenon present in systems of this kind as well? As the first application of our main result, we  provide a positive answer.
	\\
	\textbf{Application 1 (Theorem \ref{m-thm1}):}
	If a system  with positive directional measure-theoretic entropy  then it is multivariant directional mean Li-Yorke chaotic along the corresponding direction.
	\smallskip
	
	To further study the chaos in dynamical systems with positive measure-theoretical entropy.  We introduce and investigate the directional stable and unstable sets. Moreover, using localization theory ideas, we introduce the directional entropy tuples, and study some properties of them. With the help of these consequences and our main theorems, we prove our second main application:\\
	\textbf{Application 2 (Theorem \ref{m-thm2}):}
	For any ergodic  $\mathbb{Z}^2$-system, the intersection of the set of directional measure-theoretic entropy tuples with the set of directional asymptotic tuples is dense in the set of directional measure-theoretic entropy tuples.

	\medskip
	\medskip
	This paper is organized as follows. In Section 2, we recall some basic notions that we use in this paper. In Section 3, we introduce directional Pinsker algebra, and construct a skew product system to study directional entropy and Pinsker algebra. In Section 4, we prove the first application. In Section 5,
	we introduce directional entropy n-tuples and study many properties of them. In Section 6, we prove  the second application.
	
	\section{Preliminaries}
	In this section we recall some notions of dynamical systems that are used later. 
	Throughout this paper,  by a $\mathbb{Z}^2$-topological dynamical system ($\mathbb{Z}^2$-t.d.s. for short), we mean a pair $(X,T)$, where $X$ is a compact metric space with a metric $d$ and the $\mathbb{Z}^2$-action $T:X\to X$ is a homeomorphism from the additive group $\mathbb{Z}^2$ to the group of homeomorphisms of $X$ such that $T^{\vec{v}}\circ T^{\vec{w}}=T^{\vec{v}+\vec{w}}$ for any $\vec{v},\vec{w}\in \mathbb{Z}^2 $ and $T^{\vec{0}} $ is the identity on $X$.  Let $\mathcal{B}_X$ be the Borel $\sigma$-algebra of $X$ and  $M(X)$ be the set of Borel probability measures defined on $\mathcal{B}_X$. The support of $\mu \in M(X)$, denoted by $\operatorname{supp}\mu$ is defined to be the set of all points $x\in X$ for which every open neighborhood $U$ of $x$ has positive measure.
	We say that $\mu\in M(X)$ is $T$-invariant if for any $\vec{v}\in \mathbb{Z}^2$ and $B\in\mathcal{B}_X$, $\mu (T^{-\vec{v}}B)=\mu(B)$. Denote by $M(X,T)$ the set of all $T$-invariant measure. We say $\mu\in M(X,T)$ is ergodic if each $B\in\mathcal{B}_X$ with $T^{-\vec{v}}B=B$ for all $\vec{v}\in\mathbb{Z}^2$ implies that either $\mu(B)=0$ or $1$. Denote by $M^e(X,T)$ the set of all ergodic measures.   Meanwhile, each $\mu\in M(X,T)$ induces a $\mathbb{Z}^2$-meausre preserving dynamical system ($\mathbb{Z}^2$-m.p.s. for short) $(X,\mathcal{B}_X,\mu,T)$.
	\smallskip
	
	Let  $\vec{v}=(1,\beta)\in \mb{R}^2$ be a direction vector.  We remark that, in this paper, we only consider the case $\b\notin\mathbb{Q}$,  as  the case that $\vec{v}=(0,1)$ or $\b\in\mathbb{Q}$, can be transformed to a $\mathbb{Z}$-action.  Given $b\in(0,\infty)$,
	%In order to make clear statement of our results, we introduce some notations. 
	we put $$\Lambda^{\vec{v}}(b)=\left\{(m,n)\in\mathbb{Z}^2:\beta m-b\leq n\le \beta m+b\right\},$$  and we write $\Lambda^{\vec{v}}(b)\cap ([0,N-1]\times \mathbb{Z})$ as $\Lambda_N^{\vec{v}}(b)$ for any $N\in\mathbb{N}$.

	\subsection{Entropy for $\mathbb{Z}$-actions}Let $(X,T)$ be a $\mathbb{Z}$-t.d.s.
	We denote the collection of finite partitions and finite open covers of $X$ by $\mathcal{P}_{X}$ and $\mathcal{C}_{X}^{o}$, respectively. Given $\mathcal{U},\mathcal{V}\in \mathcal{C}_{X}^{o}$, define $\mathcal{U} \vee \mathcal{V}:=\{U \cap V: U \in \mathcal{U}, V \in \mathcal{V}\}$. Similarly, we can define $\a\vee\b$ for $\a,\b\in\mathcal{P}_X$.
	
	Define $N(\mathcal{U})$ as the minimum among the cardinalities of the subcovers of $\mathcal{U}\in \mathcal{C}_{X}^o$. The topological entropy of $\mathcal{U}$ with respect to $T$ is defined by
	$$
	h_{top}(T, \mathcal{U}):=\lim _{n \to\infty} \frac{1}{n} \log N\left(\bigvee_{i=0}^{n-1} T^{-i} \mathcal{U}\right) .
	$$
	The topological entropy of $(X, T)$ is defined by $h_{top}(T):=\sup _{\mathcal{U} \in \mathcal{C}_{X}^{\circ}} h_{top}(T, \mathcal{U})$.
	
	For a given $\alpha \in \mathcal{P}_{X}$ and $\mu \in M(X)$, let $H_{\mu}(\alpha)=\sum_{A \in \alpha}-\mu(A) \log \mu(A)$.  
	When $\mu \in M(X, T)$, we define the measure-theoretic entropy of $\alpha$ with respect to $\mu$ and $T$ as
	$$
	h_{\mu}(T, \alpha):=\lim _{n \rightarrow \infty} \frac{1}{n} H_{\mu}\left(\bigvee_{i=0}^{n-1} T^{-i} \alpha\right) .
	$$
	The measure-theoretic entropy of $(X,\mathcal{B}_X,\mu,T)$ is defined by $h_{\mu}(T):=\sup _{\alpha \in \mathcal{P}_{X}} h_{\mu}(T, \alpha)$. 
	The relation between topological entropy and measure-theoretic entropy is the following well known variational principle: $h_{top}(T)=\sup _{\mu \in M(X, T)} h_{\mu}(T)=\sup _{\mu \in M^{e}(X, T)} h_{\mu}(T)$ (see for example \cite[Chapter 9.3]{Peter}).
	
	Let $(\O,\mathcal{F},P,\theta)$ be a $\mathbb{Z}$-m.p.s., and $\{\varphi(\o):\o\in\O\}$ be a family of measurable transformations on the space $(X,\mathcal{B}_X)$ such that the map $(\o,x)\mapsto \varphi(\o)x$ is measurable from $\O\times X$ to $X$. Then the skew-product transformation $\T$ on $\O\times X$ can be defined by 
	\[\T(\o,x)=(\t\o,\varphi(\o)x)\text{ for any }(\o,x)\in \O\times X.\]
	Given  $\mu\in M(X)$ such that it is invariant with respect to $\varphi(\o)$ for all $\o\in\O$, then the product measure $\widetilde{\mu}=P\times \mu$ is $\T$-invariant. The fiber entropy of $\varphi$ with respect to the partition $\a\in\mathcal{P}_{X}$ can be defined via
	\[h_{\widetilde{\mu}}(\varphi,\a):=\lim_{n \to\infty}\int \frac{1}{n}H_\mu\left(\bigvee_{i=0}^{n-1}\varphi^{-1}(i,\o)\a\right)dP(\o),\]
	where $\varphi(i,\o):=\varphi(\t^{i-1}\o)\circ\ldots\circ\varphi(\o)$. The existence of this limit was proved by Abramov and Rokhlin \cite{AR}.  
	Define the fiber entropy of $\varphi$ by 
	\[h_{\widetilde{\mu}}(\varphi):=\sup_{\alpha \in \mathcal{P}_{X}}h_{\widetilde{\mu}}(\varphi,\a).\]
	Moreover, they established the relation between fiber entropy of $\varphi$, and entropy of $\T$ and $\t$, called Abramov-Rokhlin formula, namely, 
	\begin{equation}\label{eq:AR formula}
		h_{\widetilde{\mu}}(\T)=h_{\widetilde{\mu}}(\varphi)+h_P(\t).
	\end{equation}
	
	In fact, the fiber entropy also can be defined as follows 	(See e.g. \cite[Page 384]{MR2186245}).  Given $\widetilde{\a}\in\mathcal{P}_{\O\times X}$, let
	\[h_{\widetilde{\mu}}(\varphi,\widetilde{\a}):=\lim_{n \to\infty}\int \frac{1}{n}H_\mu\left(\bigvee_{i=0}^{n-1}\varphi^{-1}(i,\o)\widetilde{\a}_{\t^i\o}\right)dP(\o),\]
	where $\widetilde{\a}_{\t^i\o}:=\{A_{\t^i\o}:A\in\widetilde{\a}\}$, and $A_\o:=\{x\in X:(\o,x)\in A\}$ for any $\o\in\O$. Then 
	\[h_{\widetilde{\mu}}(\varphi)=\sup_{\widetilde{\a}\in\mathcal{P}_{\O\times X}}h_{\widetilde{\mu}}(\varphi,\widetilde{\a}).\]
	
	\subsection{Disintegration of measures}
	Let $X$ be a compact metric space, $\mathcal{B}_X $ the Borel $\sigma$-algebra of $X$
	and $\mu\in M(X)$.  Let $\mathcal{F}$ be a sub-$\sigma$-algebra of $\mathcal{B}_{X}$. We always assume that $\mathcal{F}$ is the completion under $\mu$.  Then there exists  a unique measurable map $X\to \mathcal{M}(X)$, $x\mapsto \mu_x$ such that for every $f\in L^1(X,\mathcal{B}_{X},\mu)$, $\mathbb{E}_{\mu}(f|\mathcal{F})(x)=\int fd\mu_x$ for $\mu$-a.e. $x\in X$, where $\mathbb{E}_{\mu}(f|\mathcal{F})$ is the conditional expectation of $f$ with respect to the sub-$\sigma$-algebra $\mathcal{F}$. This map is called the disintegration of $\mu$ with respect to $\mathcal{F}$. See \cite[Theorem 5.14]{Wardbook} for more details.
	\subsection{Stable and Unstable sets under $\mathbb{Z}$-actions}
	For a $\mathbb{Z}$-t.d.s.,  the stable set of a point $x\in X$ is defined as
	$$
	W^{s}(x, T)=\left\{y \in X: \lim _{n \to\infty} d\left(T^{n} x, T^{n} y\right)=0\right\}
	$$
	and the unstable set of $x\in X$ is defined as
	$$
	W^{u}(x, T)=\left\{y \in X: \lim _{n \to\infty} d\left(T^{-n} x, T^{-n} y\right)=0\right\} .
	$$
	Clearly, $W^{s}(x, T)=W^{u}\left(x, T^{-1}\right)$ and $W^{u}(x, T)=W^{s}\left(x, T^{-1}\right)$ for each $x \in X$.
	The following result  was proved in \cite[Lemma 3.1]{HLY}.
	\begin{lem}\label{lem4}
		Let $(X, T)$ be a $\mathbb{Z}$-t.d.s. and $\mu \in M(X, T)$ with $h_{\mu}(T)>0$. If
		$
		\mu=\int_{X} \mu_{x} d \mu(x)
		$
		is the disintegration of $\mu$ over the Pinsker algebra $\mathcal{P}_{\mu}(T)$, then for $\mu$-a.e. $x \in X$,
		$$
		\overline{W^{s}(x, T) \cap \operatorname{supp}\mu_{x}}=\operatorname{supp}\mu_{x} \quad \text { and } \quad \overline{W^{u}(x, T) \cap \operatorname{supp}\mu_{x}}=\operatorname{supp}\mu_{x}.
		$$
	\end{lem}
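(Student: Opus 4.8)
The plan is to prove the assertion for the unstable set and then obtain the stable-set assertion by time reversal. Because $h_\mu(T^{-1},\alpha)=h_\mu(T,\alpha)$ for every finite partition $\alpha$, the two Pinsker algebras agree, $\mathcal{P}_{\mu}(T)=\mathcal{P}_{\mu}(T^{-1})$, so the disintegration $\{\mu_x\}$ is common to $T$ and $T^{-1}$; since $W^{s}(x,T)=W^{u}(x,T^{-1})$, it suffices to show that for $\mu$-a.e.\ $x$ one has $\overline{W^{u}(x,T)\cap\operatorname{supp}\mu_x}=\operatorname{supp}\mu_x$. Note also that if $h_\mu(T)=0$ then $\mathcal{P}_{\mu}(T)=\mathcal{B}_X$, the measures $\mu_x$ are point masses, and the statement is trivial; thus the hypothesis $h_\mu(T)>0$ is exactly what makes the fibres $\operatorname{supp}\mu_x$ nondegenerate.

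The structural input I would use is that the extension of $(X,\mathcal{B}_X,\mu,T)$ over its Pinsker factor is relatively a Kolmogorov system: by the relative Rokhlin--Sinai theorem this is the defining feature of the maximal zero-entropy factor, and it implies that for any finite partition $\alpha$ with $\mu(\partial\alpha)=0$ the remote past relative to $\mathcal{P}_{\mu}(T)$ is trivial, namely $\bigcap_{n\ge 1}\big(\bigvee_{j\ge n}T^{j}\alpha\vee\mathcal{P}_{\mu}(T)\big)=\mathcal{P}_{\mu}(T)$ modulo $\mu$. Conceptually this says that, conditioned on the Pinsker coordinate, the distant past of the $\alpha$-name is asymptotically independent of the present location, and this independence is precisely the freedom needed to prescribe where a point sits now while forcing its distant past to shadow that of $x$.

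Concretely, I would fix a countable base $\{U_k\}$ of open sets and a refining sequence of finite partitions $\alpha_1\preceq\alpha_2\preceq\cdots$ with $\operatorname{diam}\alpha_m\to 0$ and $\mu(\partial\alpha_m)=0$. For $\mu$-a.e.\ $x$ with $\mu_x(U_k)>0$ I would use the relative independence above to produce, for large $N$, a point $y_N\in U_k\cap\operatorname{supp}\mu_x$ whose $\alpha_m$-name agrees with that of $x$ at all times $-j$ with $n_m\le j\le N$, for a suitable increasing cut-off $n_m$; the point is that, by the relative independence, $\mu_x$ continues to charge $U_k$ positively even after conditioning on agreement of the $\alpha_m$-name in the remote past, provided $n_m$ is large. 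Passing to a convergent subsequence $y_N\to y\in\overline{U_k}\cap\operatorname{supp}\mu_x$, one finds for each level $m$ that $d(T^{-j}x,T^{-j}y)\le\operatorname{diam}\alpha_m$ for all $j\ge n_m$, whence $\limsup_{j\to\infty}d(T^{-j}x,T^{-j}y)\le\operatorname{diam}\alpha_m\to 0$ and $y\in W^{u}(x,T)$. Intersecting the resulting full-measure sets over all $k$ and nesting the $U_k$ inside arbitrary open sets meeting the support yields the claimed density.

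The principal obstacle is the passage from symbolic agreement of names to genuine metric convergence $d(T^{-j}x,T^{-j}y)\to 0$ while simultaneously keeping $y$ inside $\operatorname{supp}\mu_x$: this forces a careful diagonalisation across the partitions $\alpha_m$, a Borel--Cantelli-type estimate controlling the $\mu_x$-measure of the times at which names fail to match, and the choice of partitions with $\mu$-null boundaries so that closures of atoms still encode metric proximity and the conditional measures behave well under the conditioning. A secondary point is to verify the relative Kolmogorov property in this purely measure-theoretic setting and to arrange the exceptional null set of $x$'s to be independent of $k$ via a countable intersection.
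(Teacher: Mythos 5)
First, a point of reference: the paper does not prove this lemma at all --- it is quoted verbatim from \cite[Lemma 3.1]{HLY}, whose proof in turn builds on the Blanchard--Host--Ruette machinery for asymptotic pairs. Your overall strategy (reduce to the unstable case by time reversal, invoke triviality of the remote past relative to the Pinsker algebra, and convert that measure-theoretic statement into metric asymptoticity via refining partitions with $\mu$-null boundaries) is exactly the strategy of that proof, and your structural input $\bigcap_{n\ge1}\bigl(\bigvee_{j\ge n}T^{j}\alpha\vee\mathcal{P}_{\mu}(T)\bigr)=\mathcal{P}_{\mu}(T)$ is correct (it is the relative form of the Rokhlin--Sinai theorem, so it should be cited as a theorem rather than treated as a definition).

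There is, however, a genuine gap at the decisive step, which you have in effect flagged yourself without closing. Reverse martingale convergence for a single fixed level $m$ does give, for $\mu$-a.e.\ $x$ with $\mu_x(U_k)>0$, that $\mathbb{E}_{\mu}\bigl(1_{U_k}\mid\bigvee_{j\ge n}T^{j}\alpha_m\vee\mathcal{P}_{\mu}(T)\bigr)(x)>0$ for all large $n$. But to produce the points $y_N$, and hence the limit $y$, you need the conditional measure of $U_k$ to remain positive after imposing the name-agreement constraints at \emph{all} levels $m$ simultaneously, with cut-offs $n_m$ fixed once and for all; and a decreasing sequence of events, each of positive conditional measure, can perfectly well have intersection of conditional measure zero. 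What is missing is the uniform quantitative control: one must choose $n_{m+1}$ inductively so large that conditioning on the level-$(m+1)$ remote past decreases $\mathbb{E}(1_{U_k}\mid\cdot)(x)$ by at most a summable amount outside a set of summably small measure, and then run a Borel--Cantelli argument to consolidate the exceptional sets over all $m$ and all basic open sets $U_k$. This bound is the actual content of the proof in \cite{HLY}, and nothing in your appeal to ``relative independence'' supplies it. Two secondary points also need justification rather than assertion: that a point charged by the finer conditional measures lies in $\operatorname{supp}\mu_x$ (true, because $\mu_x=\int\mu_y^{\mathcal{F}}\,d\mu_x(y)$ for any $\mathcal{F}\supset\mathcal{P}_{\mu}(T)$, so $\mu_y^{\mathcal{F}}(\operatorname{supp}\mu_x)=1$ for $\mu_x$-a.e.\ $y$), and that name agreement on closed atoms passes to the limit $y_N\to y$, which is where the hypothesis $\mu(\partial\alpha_m)=0$ must actually be used. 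With these supplied, your argument closes and coincides in substance with the cited proof.
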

	\section{Directional entropy and Pinkser algebra}
	In this section, we provide an equivalent definition of directional measure-theoretic entropy, which is formally closer to the definition of entropy. With this definition in hand, we prove the directional measure-theoretic entropy of a $\mathbb{Z}^2$-system is equal to the entropy of a skew-product system we constructed.
	
	\smallskip
	Firstly, we recall the classical definition of directional measure-theoretical entropy.	Let $(X,T)$ be a $\mathbb{Z}^2$-t.d.s.,  $\mu\in M(X,T)$ and $\vec{v}=(1,\beta)\in\mathbb{R}^2$  be a direction vector. 
	For any $\alpha\in\mathcal{P}_X$, the directional  measure-theoretic entropy  is defined as
	$$
	\widetilde h^{\vec{v}}_\mu(T, \alpha)=\sup _{B} \limsup_{t \to \infty} \frac{1}{t} H_{\mu}\left(\bigvee_{(m, n) \in B+[0, t] \cdot \vec{v}} T^{-(m,n)}\alpha\right),
	$$
	where $B+[0, t] \cdot \vec{v}=\left\{(m, n) \in \mathbb{Z}^{2} :\right.$ there exists $(l, k) \in B$ such that $(m, n)-(l, k)=p \vec{v}$ for some $p \in[0, t]\}$. We take the supremum over all bounded subsets $B$ of $\mathbb{R}^{2}$. 
	
	We provide an equivalent definition of  directional  measure-theoretic entropy by $$h^{\vec{v}}_\mu(T, \alpha):=\sup_{b>0}\limsup_{N\to\infty}\frac{1}{N}H_{\mu}\left(\bigvee_{(m, n) \in\Lambda_N^{\vec{v}}(b) } T^{-(m,n)}\alpha\right).$$
	Note that for any bounded subsets $B$ of $\mathbb{R}^2$ and $t\in\mathbb{R}$, there exist $b\in(0,\infty)$ and $N\in\mathbb{N}$ large enough such that $B+[0, t] \cdot \vec{v}\subset \Lambda_N^{\vec{v}}(b)$; and conversely, for any $b\in(0,\infty)$ and $N\in\mathbb{N}$, $\Lambda_N^{\vec{v}}(b)$ is a finite subset. Thus, two definitions of directional measure-theoretic entropy are equivalent. Namely, for any partition $\alpha\in \mathcal{P}_X$, 
	$$\widetilde h^{\vec{v}}_\mu(T, \alpha)= h^{\vec{v}}_\mu(T, \alpha).$$
	Thus, the directional measure-theoretic entropy of $\mu$ along $\vec{v}$ can be defined by $$h^{\vec{v}}_\mu(T):=\sup _{\alpha \in \mathcal{P}_{X}} h^{\vec{v}}_\mu(T, \alpha).$$

	Now we construct a skew product system, and prove its entropy is equal to the directional entropy. Let $\O=[0,1)$, $\mathcal{C}$
	be the Borel  $\sigma$-algebra on $\O$, and $m$ be the Lebesgue measure on $\O$. As $\O$ can be viewed as a compact Abelian group (one-dimensional torus), we always use the bi-invariant metric $\rho$ on it, i.e., $\rho(x+z,y+z)=\rho(x,y)$ for any $x,y,z\in \O$. Let $R_\b$ be the rotation on $\O$, i.e., $R_\b t=t+\b\pmod 1$ for any $t\in\O$, where $\b$ is the irrational direction.  Define a skew product system $(\O\times X,\mathcal{C}\times \mathcal{B}_X,m\times \mu, \T)$ by 
	\begin{equation}\label{eq:skew product}
		\T:\O\times X\to \O\times X,\text{ }(\o,x)\mapsto (R_\b t,\varphi(1,t)x),
	\end{equation}
	where $\varphi(1,t)=T^{(1,[\b+t])}$, and $[a]$ is the largest natural number less than $a\in\mathbb{R}$. Denote a metric on $\O\times X$ by 
	\[\widetilde{d}((t,x),(s,y)):=\max\{\rho(t,s),d(x,y)\}\text{ for any }(t,x),(s,y)\in \O\times X.\]
	For convenience, in the following, we denote \[\widetilde{X}=\O\times X,\text{ }\widetilde{\mathcal{B}}=\mathcal{C}\times \mathcal{B}_X,\text{ and }\widetilde{\mu}=m\times \mu.\]
	
	Now we prove the directional entropy of $T$ is equal to the skew entropy of $\T$, and fiber entropy of $\varphi$.
	\begin{thm}\label{lem1}
		$h_{\mu}^{\vec{v}}(T)=h_{\widetilde{\mu}}(\T)=h_{\widetilde{\mu}}(\varphi).$
	\end{thm}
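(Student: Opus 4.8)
The plan is to prove the two equalities separately: the relation $h_{\widetilde{\mu}}(\T)=h_{\widetilde{\mu}}(\varphi)$ is essentially free, while $h_{\mu}^{\vec{v}}(T)=h_{\widetilde{\mu}}(\varphi)$ carries all the content. For the first, I would invoke the Abramov--Rokhlin formula \eqref{eq:AR formula}, which here reads $h_{\widetilde{\mu}}(\T)=h_{\widetilde{\mu}}(\varphi)+h_m(R_\beta)$. Since $R_\beta$ is an irrational rotation of the circle, it has discrete spectrum and hence $h_m(R_\beta)=0$, so $h_{\widetilde{\mu}}(\T)=h_{\widetilde{\mu}}(\varphi)$.

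The key preliminary step for the second equality is to identify the fiber maps explicitly. Writing $\omega\in\Omega=[0,1)$ and using that $T$ is a $\mathbb{Z}^2$-action, so that compositions of its iterates add exponents, I would compute $\varphi(i,\omega)=T^{(i,\,s_i(\omega))}$ with $s_i(\omega)=\sum_{j=0}^{i-1}[\beta+\theta^j\omega]$, where $\theta^j\omega=\{\omega+j\beta\}$. The sum telescopes: the identity $[\beta+\{\omega+j\beta\}]=\lfloor\omega+(j+1)\beta\rfloor-\lfloor\omega+j\beta\rfloor$ gives $s_i(\omega)=\lfloor\omega+i\beta\rfloor$. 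Thus $\varphi(i,\omega)=T^{(i,\lfloor i\beta+\omega\rfloor)}$, and for each fixed $\omega$ the fiber orbit $\{(i,\lfloor i\beta+\omega\rfloor):0\le i\le N-1\}$ tracks the line of slope $\beta$ with error at most $1$; in particular it lies inside the band $\Lambda_N^{\vec{v}}(b)$ for every $b\ge 1$, uniformly in $\omega$.

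With this identification, I would prove $h_{\mu}^{\vec{v}}(T)=h_{\widetilde{\mu}}(\varphi)$ by two inequalities, both resting on partition comparisons that hold pointwise in $\omega$, so that the integral over $\Omega$ is harmless. For the inequality $h_{\widetilde{\mu}}(\varphi)\le h_{\mu}^{\vec{v}}(T)$: since $\{(i,\lfloor i\beta+\omega\rfloor):i<N\}\subseteq\Lambda_N^{\vec{v}}(b)$ for $b\ge1$, the fiber partition $\bigvee_{i=0}^{N-1}\varphi^{-1}(i,\omega)\alpha$ is coarser than $\bigvee_{(m,n)\in\Lambda_N^{\vec{v}}(b)}T^{-(m,n)}\alpha$, so the entropy bound holds for every $\omega$; integrating and then letting $N\to\infty$ yields $h_{\widetilde{\mu}}(\varphi,\alpha)\le h_{\mu}^{\vec{v}}(T,\alpha)$. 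For the reverse inequality: given $b$, put $K=\lceil b\rceil+1$ and fatten $\alpha$ to $\alpha_b:=\bigvee_{|k|\le K}T^{-(0,k)}\alpha$; because $|\lfloor i\beta+\omega\rfloor-i\beta|<1$ uniformly, the partition $\bigvee_{i=0}^{N-1}\varphi^{-1}(i,\omega)\alpha_b$ refines $\bigvee_{(m,n)\in\Lambda_N^{\vec{v}}(b)}T^{-(m,n)}\alpha$ for every $\omega$, whence $\limsup_{N\to\infty}\frac1N H_\mu\big(\bigvee_{(m,n)\in\Lambda_N^{\vec{v}}(b)}T^{-(m,n)}\alpha\big)\le h_{\widetilde{\mu}}(\varphi,\alpha_b)\le h_{\widetilde{\mu}}(\varphi)$. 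Taking suprema over $b$ and then over $\alpha$ closes both directions.

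The main obstacle is the structural mismatch between the two definitions: the directional entropy involves no averaging over a starting phase and is a $\sup_b\limsup_N$, whereas the fiber entropy integrates over the phase $\omega\in\Omega$ and is a genuine limit. The device that resolves it is precisely the telescoping identity $\varphi(i,\omega)=T^{(i,\lfloor i\beta+\omega\rfloor)}$ together with the fact that the geometric containments between the single-point fiber orbit and the width-$b$ band hold \emph{uniformly} in $\omega$, after fattening $\alpha$ by the bounded vertical shifts $T^{-(0,k)}$. This uniformity is exactly what lets $\int_\Omega\,dm$ pass through the entropy estimates at no cost, so that no quantitative information about how the fiber orbit equidistributes within the band is required.
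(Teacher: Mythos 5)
Your proposal is correct, but it takes a genuinely different route for the substantive equality. The paper's proof is a two-line citation: it invokes Park's formula \cite[Theorem 3]{Park111}, which states $h_{\widetilde{\mu}}(\T)=h_{m}(R_\b)+h^{\vec{v}}_{\mu}(T)$, kills the rotation term, and then applies the Abramov--Rokhlin formula \eqref{eq:AR formula} exactly as you do to get the third quantity. You instead re-derive the special case of Park's theorem needed here from scratch: the telescoping identity $\varphi(i,\omega)=T^{(i,\lfloor i\beta+\omega\rfloor)}$, the observation that the fiber orbit lies in every band $\Lambda_N^{\vec{v}}(b)$ with $b\ge1$ (giving $h_{\widetilde{\mu}}(\varphi,\alpha)\le h_\mu^{\vec{v}}(T,\alpha)$), and the fattening $\alpha_b=\bigvee_{|k|\le K}T^{-(0,k)}\alpha$ so that the fiber partition refines the band partition uniformly in $\omega$ (giving the reverse inequality after taking suprema). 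I checked the details: the telescoping computation is right, the containments are right, and the pointwise-in-$\omega$ comparisons do pass through the integral $\int_\Omega dm$ as you claim, so the argument closes. What your approach buys is self-containedness and transparency about \emph{why} the skew product computes directional entropy --- the geometric matching of the fiber orbit with the band --- at the cost of about a page of partition bookkeeping; what the paper's approach buys is brevity and generality, since Park's theorem is stated for a wider class of skew products. Either is acceptable; if you keep your version, state explicitly that you are reproving the relevant case of \cite[Theorem 3]{Park111}, and note that $[\,\cdot\,]$ in the paper agrees with $\lfloor\cdot\rfloor$ for $m$-a.e.\ $\omega$ since $\beta$ is irrational, so the identification of $\varphi(i,\omega)$ holds almost everywhere, which suffices for all the integrated entropy estimates.
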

	\begin{proof}
		It follows from \cite[Theorem 3]{Park111} that $h_{\widetilde{\mu}}(\T)=h_{m}(R_\b)+h^{\vec{v}}_{\mu}(T)$. It is well known that the irrational rotation on the torus has zero entropy. Thus, $h_{\widetilde{\mu}}(\T)=h^{\vec{v}}_{\mu}(T)$. Moreover, using \eqref{eq:AR formula}, we have that 	$h_{\mu}^{\vec{v}}(T)=h_{\widetilde{\mu}}(\T)=h_{\widetilde{\mu}}(\varphi).$
	\end{proof}

	To delve deeper into the study of  directional entropy, we introduce the directional Pinsker algebra along $\vec{v}$ by
	\[\mathcal{P}_{\mu}^{\vec{v}}:=\{A\in\mathcal{B}_X: h^{\vec{v}}_\mu(T, \{A,A^c\})=0\}.\]
	From the fact that 
	for any $b_1,b_2\in(0,\infty)$ with $b_1>b_2$, there exists a finite subset $C$ of $\mathbb{Z}^2$ such that
	\begin{equation*}
		\Lambda^{\vec{v}}(b_1)\subset \bigcup_{(m',n')\in C}\lk((m',n')+\Lambda^{\vec{v}}(b_2)\re),
	\end{equation*} we have the following characterization of directional Pinsker algebra.
	\begin{prop}\label{prop4}For any $A\in\mathcal{B}_X$, the following two statements are equivalent:
		\begin{enumerate}[(i)]
			\item  $A\in\mathcal{P}_{\mu}^{\vec{v}}$;
			\smallskip
			\item There exists $b\in(0,\infty)$ such that $$h_\mu^{\vec{v},b}(T,\{A,A^c\}):=\limsup_{N\to\infty}\frac{1}{N}H_{\mu}\left(\bigvee_{(m, n) \in\Lambda_N^{\vec{v}}(b) } T^{-(m,n)}\{A,A^c\}\right)=0.$$
		\end{enumerate}
	\end{prop}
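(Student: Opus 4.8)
The plan is to prove the two implications separately; $(\mathrm{i})\Rightarrow(\mathrm{ii})$ is immediate and $(\mathrm{ii})\Rightarrow(\mathrm{i})$ carries all the content. Throughout write $\alpha=\{A,A^c\}$ and, for a finite $F\subset\mathbb{Z}^2$, abbreviate $\alpha_F=\bigvee_{(m,n)\in F}T^{-(m,n)}\alpha$. For $(\mathrm{i})\Rightarrow(\mathrm{ii})$: by definition $A\in\mathcal{P}_\mu^{\vec{v}}$ means $h_\mu^{\vec{v}}(T,\alpha)=\sup_{b>0}h_\mu^{\vec{v},b}(T,\alpha)=0$; since each $h_\mu^{\vec{v},b}(T,\alpha)\ge 0$ and their supremum vanishes, every one of them is zero, so $(\mathrm{ii})$ holds for any choice of $b$.

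For $(\mathrm{ii})\Rightarrow(\mathrm{i})$ I would fix $b_0>0$ with $h_\mu^{\vec{v},b_0}(T,\alpha)=0$ and show $h_\mu^{\vec{v},b}(T,\alpha)=0$ for every $b>0$. For $b\le b_0$ this is free from monotonicity: $\Lambda_N^{\vec{v}}(b)\subset\Lambda_N^{\vec{v}}(b_0)$ makes $\alpha_{\Lambda_N^{\vec{v}}(b)}$ coarser, whence $h_\mu^{\vec{v},b}(T,\alpha)\le h_\mu^{\vec{v},b_0}(T,\alpha)=0$. The real work is the case $b_1>b_0$, where I would invoke the covering fact displayed just before the proposition: choose a finite $C\subset\mathbb{Z}^2$ with $\Lambda^{\vec{v}}(b_1)\subset\bigcup_{(m',n')\in C}\big((m',n')+\Lambda^{\vec{v}}(b_0)\big)$ and set $M=\max_{(m',n')\in C}|m'|$. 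Intersecting with $[0,N-1]\times\mathbb{Z}$, applying subadditivity of $H_\mu$ over a union, and using the $T$-invariance of $\mu$ (which gives $H_\mu(\alpha_{(m',n')+E})=H_\mu(\alpha_E)$ since translating $E$ by an integer vector translates the partition by a power of $T$), I can bound
$$H_\mu\big(\alpha_{\Lambda_N^{\vec{v}}(b_1)}\big)\le |C|\cdot H_\mu\big(\alpha_{\Lambda^{\vec{v}}(b_0)\cap([-M,N-1+M]\times\mathbb{Z})}\big).$$

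The one point needing care, and the only genuine obstacle, is that the window $[-M,N-1+M]$ is wider than $[0,N-1]$, so the set on the right is not literally $\Lambda_N^{\vec{v}}(b_0)$; this is exactly where the non-translation-invariance of the strips $\Lambda^{\vec{v}}(\cdot)$ must be absorbed. I would handle it by noting that the larger set exceeds $\Lambda_N^{\vec{v}}(b_0)$ by at most $2M$ extra columns, each meeting $\Lambda^{\vec{v}}(b_0)$ in at most $\lfloor 2b_0\rfloor+1$ lattice points, so subadditivity yields
$$H_\mu\big(\alpha_{\Lambda^{\vec{v}}(b_0)\cap([-M,N-1+M]\times\mathbb{Z})}\big)\le H_\mu\big(\alpha_{\Lambda_N^{\vec{v}}(b_0)}\big)+2M(\lfloor 2b_0\rfloor+1)\log 2,$$
where the error term is a constant independent of $N$. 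Dividing by $N$, taking $\limsup_{N\to\infty}$, and using $h_\mu^{\vec{v},b_0}(T,\alpha)=0$ kills both the main term and the $O(1/N)$ correction, giving $h_\mu^{\vec{v},b_1}(T,\alpha)\le|C|\cdot 0=0$. Combining the ranges $b\le b_0$ and $b>b_0$ yields $\sup_{b>0}h_\mu^{\vec{v},b}(T,\alpha)=0$, i.e. $A\in\mathcal{P}_\mu^{\vec{v}}$. I expect the edge-column bookkeeping in the last display to be the most delicate step, since it is the precise place where the failure of translation invariance of $\Lambda^{\vec{v}}(\cdot)$ is converted into a negligible additive constant.
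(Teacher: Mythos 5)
Your proposal is correct and follows exactly the route the paper intends: the paper gives no detailed proof, merely asserting that the proposition follows from the covering fact $\Lambda^{\vec{v}}(b_1)\subset\bigcup_{(m',n')\in C}\bigl((m',n')+\Lambda^{\vec{v}}(b_2)\bigr)$, and your argument is a careful fleshing-out of that same idea (subadditivity over the translates, $T$-invariance, and the bounded edge-column correction). Nothing is missing and no different method is used.
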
 
	Now we study the relation between directional Pinsker algebra and Pinsker algebra of the skew product system.
	\begin{prop}\label{prop5}
		Let $(X,T)$ be a $\mathbb{Z}^2$-t.d.s., $\mu\in M(X,T)$, $\vec{v}=(1,\beta)\in\mathbb{R}^2$ be a direction vector and $(\widetilde{X},\widetilde{\mathcal{B}},\widetilde{\mu},\T)$ be the skew product system defined by \eqref{eq:skew product}. 	Given any $u\in \mathbb{R}$, we define $R_u(t,x)=(t+u,x)$ for any $(t,x)\in\widetilde{X}$. Then $R_u(\mathcal{P}_{\widetilde{\mu}})=\mathcal{P}_{\widetilde{\mu}}$, where $\mathcal{P}_{\widetilde{\mu}}$ is the classical Pinsker algebra of $(\widetilde{X},\widetilde{\mathcal{B}},\widetilde{\mu},\T)$.
	\end{prop}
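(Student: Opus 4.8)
The plan is to reduce the statement to the standard fact that a measure-preserving automorphism commuting with $\T$ preserves the Pinsker algebra, after correcting $R_u$ by a suitable fiber automorphism. Write $\mathcal{P}:=\mathcal{P}_{\widetilde{\mu}}$ and, for a measurable map $\vec{w}\colon\Omega\to\mathbb{Z}^2$, call $(t,x)\mapsto(t,T^{\vec{w}(t)}x)$ a \emph{base-measurable fiber shift}; when $\vec{w}\equiv\vec{w}_0$ is constant write $S_{\vec{w}_0}(t,x)=(t,T^{\vec{w}_0}x)$. Each such map is a $\widetilde{\mu}$-preserving automorphism of $\widetilde{X}$ fixing the base coordinate. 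I will use two standard properties of the Pinsker algebra of $(\widetilde{X},\widetilde{\mathcal{B}},\widetilde{\mu},\T)$: (a) $\mathcal{P}$ contains every $\T$-invariant sub-$\sigma$-algebra on which $\T$ has zero entropy; and (b) if $S$ is a $\widetilde{\mu}$-preserving automorphism with $S\T=\T S$, then $S\mathcal{P}=\mathcal{P}$, since $\bigvee_{i=0}^{n-1}\T^{-i}(S\xi)=S\bigvee_{i=0}^{n-1}\T^{-i}\xi$ has, by invariance of $\widetilde{\mu}$, the same entropy as $\bigvee_{i=0}^{n-1}\T^{-i}\xi$. First I would record, using (a), that the base $\sigma$-algebra $\widetilde{\mathcal{C}}:=\{C\times X:C\in\mathcal{C}\}$ lies in $\mathcal{P}$: the projection $(t,x)\mapsto t$ is a factor map onto the rotation $(\Omega,\mathcal{C},m,R_{\b})$, which has zero entropy.

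Next I would show that every base-measurable fiber shift preserves $\mathcal{P}$. Since $T$ is abelian, $T^{\vec{w}_0}$ commutes with $T^{(1,[\b+t])}$, so each $S_{\vec{w}_0}$ commutes with $\T$; by (b), $S_{\vec{w}_0}\mathcal{P}=\mathcal{P}$. For a general base-measurable fiber shift $\Phi$ taking the constant value $\vec{w}_j$ on $E_j\times X$, where $\Omega=\bigsqcup_j E_j$, and any $A\in\mathcal{P}$, one has
\[\Phi A=\bigsqcup_j\bigl(S_{\vec{w}_j}A\cap(E_j\times X)\bigr),\]
a countable Boolean combination of members of $\mathcal{P}$, since $S_{\vec{w}_j}A\in\mathcal{P}$ by the previous line and $E_j\times X\in\mathcal{P}$ because $\widetilde{\mathcal{C}}\subseteq\mathcal{P}$; hence $\Phi A\in\mathcal{P}$. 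Applying this to both $\Phi$ and $\Phi^{-1}$ gives $\Phi\mathcal{P}=\mathcal{P}$.

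The map $R_u$ itself does not commute with $\T$, so (b) does not apply to it directly; this is the crux. To get around it I would introduce
\[\Psi_u(t,x):=(\{t+u\},\,T^{(0,[t+u])}x),\qquad \Phi_u(t,x):=(t,\,T^{(0,-[t-u])}x),\]
where $\{\cdot\}$ and $[\cdot]$ denote the fractional and integer parts. A direct computation—using that $T$ is abelian together with the identity $[\b+\{t+u\}]+[t+u]=[t+u+\b]$ and its companion $[\b+t]+[\{t+\b\}+u]=[t+\b+u]$—shows $\Psi_u\T=\T\Psi_u$. Moreover $\Phi_u$ is a base-measurable fiber shift and $\Psi_u=\Phi_u\circ R_u$, i.e.\ $R_u=\Phi_u^{-1}\circ\Psi_u$. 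Then $\Psi_u\mathcal{P}=\mathcal{P}$ by (b) and $\Phi_u^{-1}\mathcal{P}=\mathcal{P}$ by the previous paragraph, whence
\[R_u(\mathcal{P}_{\widetilde{\mu}})=\Phi_u^{-1}\bigl(\Psi_u(\mathcal{P}_{\widetilde{\mu}})\bigr)=\Phi_u^{-1}(\mathcal{P}_{\widetilde{\mu}})=\mathcal{P}_{\widetilde{\mu}}.\]

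The main obstacle is precisely the failure of $R_u$ to commute with $\T$: the entire argument hinges on absorbing this discrepancy into the base-measurable fiber shift $\Phi_u$ and on proving that such shifts preserve $\mathcal{P}$. That last point is where the inclusion $\widetilde{\mathcal{C}}\subseteq\mathcal{P}$ is essential, and the verification that $\Psi_u$ commutes with $\T$ is the technical heart, resting on the integer-part identities above (equivalently, on the fact that $[\b+t]$ is, up to the additive constant $\b$, a coboundary for $R_{\b}$ with transfer function $t\mapsto t$, which is exactly what makes the correction $\Phi_u$ available).
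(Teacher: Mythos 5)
Your proof is correct, but it takes a genuinely different route from the paper's. The paper argues via the Abramov--Rokhlin fiber entropy: since $h_m(R_\beta)=0$, membership $A\in\mathcal{P}_{\widetilde{\mu}}$ is equivalent to $h_{\widetilde{\mu}}(\varphi,\{A,A^c\})=0$, and the authors estimate $h_{\widetilde{\mu}}(\varphi,R_u\alpha)$ directly: a change of variables $t\mapsto t+u$ in the defining integral (translation invariance of $m$) replaces $\varphi(i,t)$ by $\varphi(i,t+u)$, which differs from $\varphi(i,t)$ only by one of the two fiber translates $T^{(0,[u])}$, $T^{(0,[u]+1)}$, so by $T$-invariance of $\mu$ one gets $h_{\widetilde{\mu}}(\varphi,R_u\alpha)\le 2\,h_{\widetilde{\mu}}(\varphi,\alpha)=0$. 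You replace this entropy estimate by a structural argument: factoring $R_u=\Phi_u^{-1}\circ\Psi_u$ with $\Psi_u$ commuting with $\T$ and $\Phi_u$ a piecewise-constant fiber shift, and invoking only the two classical Pinsker facts (maximality among zero-entropy invariant sub-$\sigma$-algebras, and invariance under commuting measure-preserving automorphisms). In effect you make explicit the coboundary structure of the cocycle $t\mapsto[\beta+t]$ that the paper's factor-of-two bound exploits implicitly. Your integer-part identities, the commutation $\Psi_u\T=\T\Psi_u$, and the reduction of a countably-valued fiber shift to constant ones via the inclusion of the base $\sigma$-algebra in $\mathcal{P}_{\widetilde{\mu}}$ all check out. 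The trade-off: your route is longer but self-contained modulo standard facts and avoids the fiber-entropy formalism altogether; the paper's is shorter because the Abramov--Rokhlin machinery is already set up for Theorem \ref{lem1} and Theorem \ref{thm2}, where the same fiber-entropy characterization is reused.
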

	\begin{proof}
		For any $A\in \mathcal{P}_{\widetilde{\mu}}$, as $h_m(R_\b)=0$, by \eqref{eq:AR formula},  we have that 
		\[0=h_{\widetilde{\mu}}(\varphi,\a)=\lim_{n \to\infty}\int\frac{1}{n} H_\mu\left(\bigvee_{i=0}^{n-1}\varphi^{-1}(i,t)\a_{R_\b^it}\right)dm(t),\]
		where $\a=\{A,A^c\}$. We consider $R_u\a=\{R_uA,R_uA^c\}$. Since $m$ is  the Lebesgue measure and $\mu$ is $T$-invariant, it follows that
		\begin{align*}
			h_{\widetilde{\mu}}(\varphi,R_u\a)&=\lim_{n \to\infty}\int \frac{1}{n}H_\mu\left(\bigvee_{i=0}^{n-1}\varphi^{-1}(i,t+u)\a_{R_\b^it}\right)dm(t)\\
			&\le\lim_{n \to\infty}\int \frac{1}{n}H_\mu\left(\bigvee_{i=0}^{n-1}\varphi^{-1}(i,t)\a_{R_\b^it}\right)+ \frac{1}{n}H_\mu\left(\bigvee_{i=0}^{n-1}\varphi^{-1}(i,t)T^{-(0,1)}\a_{R_\b^it}\right)dm(t)\\
			&=2\lim_{n \to\infty}\int \frac{1}{n}H_\mu\left(\bigvee_{i=0}^{n-1}\varphi^{-1}(i,t)\a_{R_\b^it}\right)dm(t)=0.
		\end{align*}
		Thus, $R_u(\mathcal{P}_{\widetilde{\mu}})\subset\mathcal{P}_{\widetilde{\mu}}$. Similarly, we can prove the other direction.
	\end{proof}
	
	\begin{thm}\label{thm2} Let $(X,T)$ be a $\mathbb{Z}^2$-t.d.s., $\mu\in M(X,T)$, $\vec{v}=(1,\beta)\in\mathbb{R}^2$ be a direction vector and $(\widetilde{X},\widetilde{\mathcal{B}},\widetilde{\mu},\T)$ be the skew-product system defined by \eqref{eq:skew product}. Then \[ \mathcal{C}\times\mathcal{P}_\mu^{\vec{v}}=\mathcal{P}_{\widetilde{\mu}}.\] Moreover,  if $\mu=\int_{X} \mu_{x} d \mu(x)$ is the disintegration of $\mu$ over the directional Pinsker algebra $\mathcal{P}_\mu^{\vec{v}}$ and $\widetilde\mu=\int_{\widetilde X} \widetilde\mu_{(t,x)} d \widetilde\mu(t,x)$ is the disintegration of $\widetilde\mu$ over the Pinsker algebra $\mathcal{P}_{\widetilde\mu}$, then  \[\widetilde\mu_{(t,x)}=\delta_t\times \mu_{x}
		\text{, for } \widetilde{\mu}\text{-a.e. } (t,x)\in \widetilde{X}.\]
	\end{thm}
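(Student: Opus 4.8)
The plan is to prove the $\sigma$-algebra identity $\mathcal C\times\mathcal P_\mu^{\vec v}=\mathcal P_{\widetilde\mu}$ by establishing both inclusions, and then to read off the shape of the disintegration from the product structure of $\mathcal P_{\widetilde\mu}$. The backbone is a partition-level bridge between the three entropies in Theorem \ref{lem1}: for every $\a\in\mathcal P_X$,
\[
h_{\widetilde\mu}(\T,\O\times\a)=h_{\widetilde\mu}(\varphi,\a),\qquad\text{and}\qquad h_{\widetilde\mu}(\varphi,\a)=0\iff h^{\vec v}_\mu(T,\a)=0,
\]
where $\O\times\a:=\{\O\times A:A\in\a\}$. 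The first identity I would get by conditioning on the sub-$\sigma$-algebra $\pi^{-1}\mathcal C$ of sets depending only on the $\O$-coordinate: since $\widetilde\mu=m\times\mu$, the fibrewise computation gives $H_{\widetilde\mu}(\bigvee_{i=0}^{n-1}\T^{-i}(\O\times\a)\mid\pi^{-1}\mathcal C)=\int_\O H_\mu(\bigvee_{i=0}^{n-1}\varphi^{-1}(i,\o)\a)\,dm(\o)$, which yields the lower bound; for the upper bound I would adjoin the finite base partition $\mathcal D=\{\o:[\b+\o]=c\}_c$ recording the integer jumps, note $\frac1n H_m(\bigvee_{j=0}^{n-1}R_\b^{-j}\mathcal D)\to h_m(R_\b)=0$, and use that conditionally on $\bigvee_j R_\b^{-j}\mathcal D$ the partition $\bigvee_i\T^{-i}(\O\times\a)$ becomes fibrewise constant. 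For the equivalence I would use $\varphi(i,\o)=T^{(i,s_i(\o))}$ with $s_i(\o)=\sum_{j=0}^{i-1}[\b+\{\o+j\b\}]$, which telescopes to $s_i(\o)=i\b+\o-\{\o+i\b\}$; hence $|\b i-s_i(\o)|<1$, so the staircase $\{(i,s_i(\o))\}_{i=0}^{N-1}$ lies in $\Lambda^{\vec v}_N(1)$, giving $h_{\widetilde\mu}(\varphi,\a)\le h^{\vec v}_\mu(T,\a)$; conversely each strip $\Lambda^{\vec v}_N(b)$ is contained in boundedly many vertical translates $(0,l)+\{(i,s_i(\o))\}$, so a subadditivity estimate using $T$-invariance of $\mu$ yields $h^{\vec v}_\mu(T,\a)\le(2\lceil b\rceil+3)\,h_{\widetilde\mu}(\varphi,\a)$, which suffices for the equivalence.

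Granting the bridge, the inclusion $\mathcal C\times\mathcal P_\mu^{\vec v}\subseteq\mathcal P_{\widetilde\mu}$ is straightforward. For $C\in\mathcal C$ the partition $\{C\times X,C^c\times X\}$ has $h_{\widetilde\mu}(\T,\{C\times X,C^c\times X\})=h_m(R_\b,\{C,C^c\})=0$, so $\pi^{-1}\mathcal C\subseteq\mathcal P_{\widetilde\mu}$; and for $A\in\mathcal P_\mu^{\vec v}$ the bridge gives $h_{\widetilde\mu}(\T,\{\O\times A,\O\times A^c\})=0$, so $\O\times A\in\mathcal P_{\widetilde\mu}$. Since $\mathcal P_{\widetilde\mu}$ is a sub-$\sigma$-algebra containing both families, it contains the $\sigma$-algebra they generate, namely $\mathcal C\times\mathcal P_\mu^{\vec v}$.

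The reverse inclusion $\mathcal P_{\widetilde\mu}\subseteq\mathcal C\times\mathcal P_\mu^{\vec v}$ is the main obstacle, and here Proposition \ref{prop5} is decisive. I would first show abstractly that any sub-$\sigma$-algebra $\mathcal H$ with $\pi^{-1}\mathcal C\subseteq\mathcal H\subseteq\mathcal C\times\mathcal B_X$ invariant under all the rotations $R_u$ must be a product $\mathcal H=\mathcal C\times\mathcal G$ with $\mathcal G:=\{A\in\mathcal B_X:\O\times A\in\mathcal H\}$. For this I would work with the conditional expectation $P=\mathbb E_{\widetilde\mu}(\cdot\mid\mathcal H)$ on $L^2(\widetilde\mu)=\bigoplus_{k\in\mb Z}e_k\otimes L^2(\mu)$, where $e_k(\o)=e^{2\pi ik\o}$: the $R_u$-invariance of $\mathcal H$ makes $P$ commute with the unitaries $U_u:(\o,x)\mapsto(\o+u,x)$, whose joint eigenspaces are exactly the Fourier levels, while $\pi^{-1}\mathcal C\subseteq\mathcal H$ makes $P$ commute with multiplication by each $e_k$; together these force $P(e_k\otimes h)=e_k\otimes P_0h$ with a single operator $P_0=\mathbb E_\mu(\cdot\mid\mathcal G)$ independent of $k$, i.e. $P=\mathrm{Id}\otimes P_0$ and $\mathcal H=\mathcal C\times\mathcal G$. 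Applying this to $\mathcal H=\mathcal P_{\widetilde\mu}$ (legitimate by Proposition \ref{prop5} and the previous paragraph) and identifying $\mathcal G=\mathcal P_\mu^{\vec v}$ through the bridge (an $\O\times A$ lies in $\mathcal P_{\widetilde\mu}$ iff $h^{\vec v}_\mu(T,\{A,A^c\})=0$) completes the first assertion.

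Finally, I would deduce the disintegration. With $\mathcal P_{\widetilde\mu}=\mathcal C\times\mathcal P_\mu^{\vec v}$ in hand, I claim the measures $\delta_t\times\mu_x$ satisfy the defining properties of the disintegration of $\widetilde\mu$ over $\mathcal P_{\widetilde\mu}$. By linearity and approximation it is enough to test against product functions $F(t,x)=g(t)f(x)$: the map $(t,x)\mapsto\int_X F(t,y)\,d\mu_x(y)=g(t)\,\mathbb E_\mu(f\mid\mathcal P_\mu^{\vec v})(x)$ is $\mathcal C\times\mathcal P_\mu^{\vec v}$-measurable, and since $\widetilde\mu=m\times\mu$ the conditional expectation factorizes as $\mathbb E_{\widetilde\mu}(g\otimes f\mid\mathcal C\times\mathcal P_\mu^{\vec v})=g\otimes\mathbb E_\mu(f\mid\mathcal P_\mu^{\vec v})$, so the two agree $\widetilde\mu$-a.e. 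By uniqueness of the disintegration, $\widetilde\mu_{(t,x)}=\delta_t\times\mu_x$ for $\widetilde\mu$-a.e. $(t,x)$. I expect the only delicate point in this last step to be the measurable-selection and uniqueness bookkeeping, which is routine for standard probability spaces.
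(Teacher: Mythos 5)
Your proposal is correct, and it reaches the key intermediate fact $\mathcal{P}_{\widetilde{\mu}}=\mathcal{C}\times\mathcal{D}$ by a genuinely different mechanism than the paper. The paper first proves the \emph{disintegration} statement and then deduces the product structure of the $\sigma$-algebra: it applies Lemma \ref{lem4} (density of stable sets in the supports of the Pinsker fibers, a nontrivial result of Huang--Li--Ye that requires $h_{\widetilde{\mu}}(\T)>0$, forcing a preliminary case split on whether the entropy vanishes), observes that the isometric base pins each $\operatorname{supp}\widetilde{\mu}_{(t,x)}$ inside $\{t\}\times X$, and then uses Proposition \ref{prop5} to transport the fiber at $t=0$ to all $t$, obtaining $\widetilde{\mu}_{(t,x)}=\delta_t\times\widehat{\mu}_x$ and hence $\mathcal{P}_{\widetilde{\mu}}=\mathcal{C}\times\mathcal{D}$. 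You instead prove the product structure directly by a soft spectral argument: $\pi^{-1}\mathcal{C}\subseteq\mathcal{P}_{\widetilde{\mu}}$ plus the $R_u$-invariance from Proposition \ref{prop5} force the conditional expectation onto $\mathcal{P}_{\widetilde{\mu}}$ to act as $\mathrm{Id}\otimes P_0$ on the Fourier decomposition $\bigoplus_k e_k\otimes L^2(\mu)$, and you then read off the disintegration afterwards. This avoids Lemma \ref{lem4} and the entropy dichotomy entirely and isolates a reusable abstract lemma about rotation-invariant $\sigma$-algebras over a circle base; what it costs is that you must verify that $P_0$ is itself a conditional expectation, i.e. that $\{h:1\otimes h\in L^2(\mathcal{P}_{\widetilde{\mu}})\}$ equals $L^2(\mathcal{G})$ for $\mathcal{G}=\{A:\O\times A\in\mathcal{P}_{\widetilde{\mu}}\}$ --- true because that subspace is closed under Borel functional calculus, but worth a line. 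The remaining identification $\mathcal{G}=\mathcal{P}_\mu^{\vec{v}}$ via the staircase $s_i(\o)=[\o+i\b]$, the inclusion of the staircase in $\Lambda^{\vec{v}}_N(1)$, the covering of $\Lambda^{\vec{v}}_N(b)$ by boundedly many vertical translates, and the subadditive ergodic theorem to pass from the integral over $\o$ to a single fiber, is essentially identical to the paper's argument in its last two paragraphs, so no concerns there. One small bookkeeping point: in your quantitative bound make sure to state that the a.e.\ limit $\lim_N\frac1N H_\mu(\bigvee_i\varphi^{-1}(i,\o)\a)=h_{\widetilde{\mu}}(\varphi,\a)$ is what licenses fixing a single $\o$, exactly as the paper does.
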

	\begin{proof}
		If $h_{\widetilde{\mu}}(\T)=0$ then by Theorem \ref{lem1}, one has $h_{\mu}^{\vec{v}}(T)=0$ and thus \[ \mathcal{C}\times\mathcal{P}_\mu^{\vec{v}}=\mathcal{C}\times\mathcal{B}_X=\mathcal{P}_{\widetilde{\mu}}.\]
		So $\widetilde\mu_{(t,x)}=\delta_{t}\times\delta_x=\delta_t\times \mu_{x}$
		for $\widetilde{\mu}$-a.e. $(t,x)\in \widetilde{X}$.
		Thus, in the next proof, we assume that $h_{\widetilde{\mu}}(\T)>0$.
		By Lemma \ref{lem4}, one has for $\widetilde{\mu}$-a.e. $(t,x)\in\widetilde{X}$,
		\begin{equation}\label{7}
			\overline{W^{s}((t,x), \T) \cap \operatorname{supp}\widetilde\mu_{(t,x)}}=\operatorname{supp}\widetilde\mu_{(t,x)}.
		\end{equation}
		Since $(\O,R_\b)$ is isometric, it follows that if
		$\lim_{n\to\infty}\widetilde d(\T^n(t,x),\T^n(s,y))=0$
		then $t=s$.  Hence for any $x\in X$, there exists a subset $F_x$ of $X$ such that for any $t\in[0,1)$,
		\begin{equation}\label{11}
			W^{s}((t,x), \T)=\{t\}\times F_x.
		\end{equation} 
		By \eqref{7},  one has for $\widetilde{\mu}$-a.e. $(t,x)\in \widetilde{X}$,
		\begin{equation*}
			\operatorname{supp}\widetilde\mu_{(t,x)}\subset \{t\}\times F_x.
		\end{equation*}
		In particular, we consider $t=0$. Then there exists  $\widehat\mu_{x}\in M(X)$ such that
		\begin{equation}\label{10}
			\widetilde\mu_{(0,x)}=\d_0\times\widehat\mu_{x}\text{ for } \mu\text{-a.e. }x\in X,
		\end{equation}
		where $\delta_0$ is the Dirac measure at the point $0$. By Proposition \ref{prop5}, for any $u\in\mathbb{R}$, one has 
		$(R_u)_*\widetilde\mu_{(t,x)}=\widetilde\mu_{(t+u,x)}$
		for $\widetilde{\mu}$-a.e. $(t,x)\in\widetilde{X}$, which together with \eqref{10}, implies that
		\begin{equation}\label{22}
			\widetilde\mu_{(t,x)}=(R_t)_*\widetilde\mu_{(0,x)}=(R_t)_*(\d_0\times\widehat\mu_{x})=\d_t\times\widehat\mu_{x},\text{ 	for $\widetilde{\mu}$-a.e. $(t,x)\in\widetilde{X}$.}
		\end{equation}
		By the uniqueness of the measure decomposition, there exists a sub-$\sigma$-algebra $\mathcal{D}$ of $\mathcal{B}_X$ such that \begin{equation*}
			\mathcal{P}_{\widetilde{\mu}}=\mathcal{C}\times \mathcal{D}.
		\end{equation*}
		
		Thus, we only need to prove that $\mathcal{P}_\mu^{\vec{v}}=\mathcal{D}$.	First we prove that $\mathcal{P}_\mu^{\vec{v}}\subset\mathcal{D}$. It suffices to prove that for any $A\in \mathcal{P}_\mu^{\vec{v}}$, $h_{\widetilde{\mu}}(\varphi,\a)=0$, as $h_m(R_\b)=0$ (see the proof of Abramov-Rokhlin formula \cite[Page 257]{Petersenbook}). Indeed, this is obtained from the fact that for each $N\in\mathbb{N}$, the partition $\bigvee_{(i,j)\in\Lambda_N^{\vec{v}}(1)}T^{-(i,j)}\{A,A^c\}$ is finer than $\bigvee_{i=0}^{N-1}\varphi^{-1}(i,t)\{A,A^c\}$, as $(i,[i\b+t])\in \Lambda_N^{\vec{v}}(1)$, for each $t\in\O$, and $i\in\{0,1,\ldots,N-1\}$.
		
		Next we  prove that $\mathcal{D}\subset \mathcal{P}_\mu^{\vec{v}}$.  For any $A\in \mathcal{D}$, one has that $h_{\widetilde{\mu}}(\varphi,\{A,A^c\})=0$, as
		$h_{\widetilde{\mu}}(\T,\{\O\times A,\O\times A^c \})\ge h_{\widetilde{\mu}}(\varphi,\{A,A^c\})$. Since $m$ is ergodic, it follows from subadditive ergodic theorem that there exists $t\in\O$ such that 
		\[0=h_{\widetilde{\mu}}(\varphi,\{A,A^c\})=\lim_{N \to\infty}\frac{1}{N}H_\mu\left(\bigvee_{i=0}^{N-1}\varphi^{-1}(i,t)\{A,A^c\}\right).\]
		It is straightforward to check that $\Lambda_N^{\vec{v}}(1)\subset \{(i,[i\b+t]),(i,[i\b+t]-1),(i,[i\b+t]+1)\}_{i=0}^{N-1}$, which implies that the partition $\bigvee_{j=-1,0,1}\left(\bigvee_{i=0}^{N-1}\varphi^{-1}(i,t)T^{(0,j)}\{A,A^c\}\right)$ is finer than the partition $\bigvee_{(i,j)\in\Lambda_N^{\vec{v}}(1)}T^{-(i,j)}\{A,A^c\}$. Thus, $h_\mu^{\vec{v},1}(T,\{A,A^c\})=0$. By Proposition \ref{prop4}, we have that $A\in\mathcal{P}_\mu^{\vec{v}}$. The proof is completed.
	\end{proof}
	
	\section{Directional mean Li-Yorke chaos}
	In this section, we consider  mean Li-Yorke chaotic phenomenon along some direction in a $\mathbb{Z}^2$-m.p.s with positive directional entropy. Namely,
	\begin{thm}\label{m-thm1}
		Let  $(X,T)$ be a  $\mathbb{Z}^2$-t.d.s. and $\vec{v}=(1,\beta)\in\mathbb{R}^2$ be a direction vector. If there exists $\mu\in M(X,T)$ such that $h_\mu^{\vec{v}}(T)>0$,
		then  $(X,T)$ is multivariant directional mean Li-Yorke chaotic, that is, there exists
		a Mycielski subset $C$ (i.e., a union
		of countably many Cantor sets) of $X$ such that
		for any $b\in(0,\infty)$, integer $k\geq2$ and pairwise distinct points $x_1,x_2,\dotsc,x_k$ in $C$ it holds that
		\begin{equation}\label{m-1}
			\liminf_{N\to\infty}\frac{1}{\#(\Lambda_N^{\vec{v}}(b))}\sum_{(m,n)\in\Lambda_N^{\vec{v}}(b)}\max_{1\leq i<j\leq k} d(T^{(m,n)}x_i,T^{(m,n)}x_j)=0
		\end{equation}
		and
		\begin{equation}\label{m-2}
			\limsup_{N\to\infty}\frac{1}{\#(\Lambda_N^{\vec{v}}(b))}\sum_{(m,n)\in\Lambda_N^{\vec{v}}(b)}\min_{1\leq i<j\leq k} d(T^{(m,n)}x_i,T^{(m,n)}x_j)\geq \eta_b>0,
		\end{equation}
		where $\#(A)$ is the number of elements in the finite set $A$ and $\eta_b$ is a constant only depending on $b\in(0,\infty)$.
	\end{thm}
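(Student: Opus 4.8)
The plan is to reduce everything to the skew product $(\widetilde X,\widetilde{\mathcal B},\widetilde\mu,\T)$ of \eqref{eq:skew product}, import the positive-entropy mean Li-Yorke machinery there, and then push the resulting scrambled set back down to $X$ along the direction $\vec v$. By Theorem \ref{lem1} the hypothesis $h^{\vec v}_\mu(T)>0$ is equivalent to $h_{\widetilde\mu}(\T)>0$, so $(\widetilde X,\T)$ is a genuine $\mathbb Z$-system of positive measure entropy to which Lemma \ref{lem4} applies (replacing $\mu$ by an ergodic component of positive directional entropy if necessary). First I would record the orbit formula $\T^i(t,x)=(\{i\beta+t\},\,T^{(i,[i\beta+t])}x)$, obtained by telescoping $\sum_{j=0}^{i-1}[\beta+\{t+j\beta\}]=[i\beta+t]$ from $\varphi(1,t)=T^{(1,[\beta+t])}$; this is exactly what ties the $\mathbb Z$-orbit of $\T$ to the lattice points lying within bounded distance of the line $\mathbb R\vec v$.

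Next I would build the scrambled Mycielski set inside the skew product. By Theorem \ref{thm2} the disintegration of $\widetilde\mu$ over $\mathcal P_{\widetilde\mu}=\mathcal C\times\mathcal P^{\vec v}_\mu$ has the product form $\widetilde\mu_{(t,x)}=\delta_t\times\mu_x$, and since positive entropy forces the conditional measures $\mu_x$ to be non-atomic for $\mu$-a.e.\ $x$, the fibered supports are nontrivial. Because $(\O,R_\beta)$ is isometric, every stable set $W^s((t,x),\T)$ is confined to the single fiber $\{t\}\times F_x$ (as in \eqref{11}), while Lemma \ref{lem4} makes these stable sets dense in $\operatorname{supp}\widetilde\mu_{(t,x)}$. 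Running the Kuratowski--Mycielski construction of Huang--Li--Ye in this setting --- using stable pairs to force mean proximality and the non-atomic conditional measures to force mean separation --- produces a Mycielski set $\widetilde C$ that is multivariate mean Li-Yorke for $\T$ under $\widetilde d$. A structural point I would exploit is that mean proximality in the product metric $\widetilde d$ forces equality of the $\O$-coordinate (the $\rho$-term is a rotation-invariant constant along any orbit), so $\widetilde C$ automatically lies in one fiber $\{t_0\}\times X$; I then set $C=\pi_X(\widetilde C)$, a Mycielski subset of $X$.

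It remains to translate the skew-product averages into the strip averages \eqref{m-1}--\eqref{m-2}. Fix $b$, distinct $x_1,\dots,x_k\in C$, and write each $(m,n)\in\Lambda^{\vec v}_N(b)$ as $T^{(m,n)}=T^{(0,j)}\varphi(m,t_0)$ with $|j|\le\lceil b\rceil+1$, together with the counting asymptotic $\#\Lambda^{\vec v}_N(b)\sim 2bN$ from equidistribution of $\{m\beta\}$. For \eqref{m-1}, the uniform continuity of the finitely many vertical translations $T^{(0,j)}$ lets me deduce from the skew-product mean proximality (where $\frac1N\sum_{i=0}^{N-1}\max_{a<a'}d(\varphi(i,t_0)x_a,\varphi(i,t_0)x_{a'})\to0$ along a subsequence) that each offset contributes a vanishing Cesàro average; summing over the $O(1)$ offsets and dividing by $\#\Lambda^{\vec v}_N(b)$ gives $\liminf=0$. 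For \eqref{m-2} I would bound the nonnegative strip sum below by its central-column part $n=[m\beta+t_0]$ and use $\frac{N}{\#\Lambda^{\vec v}_N(b)}\to\frac1{2b}$ to inherit the positive skew-product $\limsup$, defining $\eta_b$ from it.

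The hard part is precisely this last transfer of the separation condition \eqref{m-2}. Unlike the proximality estimate, the lower bound cannot absorb the non-isometric vertical translates $T^{(0,j)}$ (which may contract distances), so it must be extracted from the central column $n=[m\beta+t_0]$ alone; but for thin strips $b<1$ that column meets $\Lambda^{\vec v}_N(b)$ only on a density-$\approx 2b$ set of $m$, and one must ensure the skew-product distal times still have positive density there. I expect to resolve this by arranging, within the Kuratowski--Mycielski construction, that the times at which $\min_{a<a'}d(\varphi(i,t_0)x_a,\varphi(i,t_0)x_{a'})\ge\eta$ occur with upper density arbitrarily close to $1$ along the witnessing subsequence, so that their intersection with the equidistributed central-in-strip columns retains positive density and yields a $b$-dependent constant $\eta_b>0$; verifying this density statement is the main technical obstacle.
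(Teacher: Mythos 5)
Your route is the same as the paper's: pass to the skew product \eqref{eq:skew product}, use Theorem \ref{lem1} to turn positive directional entropy into positive entropy of $\T$, extract a multivariate mean Li--Yorke Mycielski set for $\T$, observe that mean proximality together with the isometry of $(\O,R_\b)$ confines that set to a single fiber $\{t_0\}\times X$, project it to $X$, and transfer the Ces\`aro averages using the fact that every $(m,n)\in\Lambda^{\vec v}_N(b)$ has the form $(m,[m\b+t_0]+j)$ with $j$ in a fixed finite set. Your telescoping identity $\varphi(m,t_0)=T^{(m,[m\b+t_0])}$ is exactly the link the paper uses. The one structural difference is that the paper applies the variational principle and quotes the \emph{topological} Huang--Li--Ye theorem (Lemma \ref{lem2}) as a black box, whereas you propose to rerun the Kuratowski--Mycielski construction from Lemma \ref{lem4} and the non-atomic disintegration; that works, but it obliges you to pass to an ergodic component and to check that positive entropy survives that passage, complications the paper's shortcut avoids.

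Two remarks on your treatment of \eqref{m-2}. First, your stated obstacle --- that the vertical translates $T^{(0,j)}$ ``may contract distances'' and so the lower bound must come from the central column alone --- is not the real difficulty: each $T^{(0,j)}$ is a homeomorphism of a compact space, so uniform continuity of $T^{(0,-j)}$ yields an $\eta'>0$ with $d(T^{(0,j)}u,T^{(0,j)}v)\ge\eta'$ whenever $d(u,v)\ge\eta$, uniformly over the finitely many relevant $j$; hence \emph{every} strip point lying over a distal time contributes at least $\eta'$. The genuine issue is the one you isolate at the end, and it arises only when $2b<1$: then the columns of $\Lambda^{\vec v}(b)$ are empty over a set of $m$ of density $1-2b$, and a set of distal times that is merely of positive upper density along the witnessing subsequence could in principle miss the nonempty columns entirely, so one must prove the positive-proportion intersection you describe. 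You leave that unverified --- but so does the paper, which proves only the case $b=1$ (where the central column always lies in the strip) and dismisses the remaining cases as ``similar.'' So your proposal reproduces the paper's argument together with its one compressed step, and your proposed remedy (arranging the distal times to have density close to $1$ in the construction) is a reasonable way to close it.
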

	
	Now we will prove Theorem \ref{m-thm1} by applying  the corresponding result about mean Li-Yorke chaos for $\mathbb{Z}$-actions \cite[Theorem 1.1]{HLY} on the skew product system $(\widetilde{X},\widetilde{\mathcal{B}},\widetilde{\mu},\T)$. Firstly, we recall that for $\mathbb{Z}$-actions.
	\begin{lem}\label{lem2}
		If a $\mathbb{Z}$-t.d.s. $(X,T)$ has positive topological entropy,
		then it is multivariant mean Li-Yorke chaotic. Namely, there exists
		a  Mycielski subset $C$ of $X$ such that
		for every integer $k\geq2$ and pairwise distinct points $x_1,x_2,\dotsc,x_k$ in $C$ it holds that
		\[\liminf_{N\to\infty}\frac{1}{N}\sum_{n=0}^{N-1}\max_{1\leq i<j\leq k} d(T^nx_i,T^nx_j)=0\]
		and
		\[\limsup_{N\to\infty}\frac{1}{N}\sum_{n=0}^{N-1}\min_{1\leq i<j\leq k} d(T^nx_i,T^nx_j)\ge \eta>0.\]
	\end{lem}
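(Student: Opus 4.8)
The statement is the single-transformation, multivariant form of mean Li-Yorke chaos; it is exactly \cite[Theorem 1.1]{HLY}, so one option is simply to invoke that reference. For the sake of a self-contained account, let me outline the proof I would give. The plan is to pass from topological to measure-theoretic entropy, extract the stable-set information recorded in Lemma \ref{lem4}, and then manufacture the scrambled set by a Kuratowski--Mycielski argument that simultaneously enforces proximality along one family of times and $\eta$-separation along another.

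First I would apply the variational principle to produce an ergodic $\mu\in M^e(X,T)$ with $h_\mu(T)>0$, and replace $X$ by $\operatorname{supp}\mu$ so that $\operatorname{supp}\mu=X$. Next I disintegrate $\mu=\int_X\mu_x\,d\mu(x)$ over the Pinsker algebra $\mathcal{P}_\mu(T)$. Since the Pinsker factor is the largest zero-entropy factor and a finite extension of a zero-entropy system again has zero entropy, positivity of $h_\mu(T)$ forces the fibers $\operatorname{supp}\mu_x$ to be infinite, hence perfect, for $\mu$-a.e. $x$; in particular each such fiber contains a Cantor set. Lemma \ref{lem4} then supplies, for $\mu$-a.e. $x$, that both $W^s(x,T)\cap\operatorname{supp}\mu_x$ and $W^u(x,T)\cap\operatorname{supp}\mu_x$ are dense in $\operatorname{supp}\mu_x$. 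For the separation half I would also bring in the $k$-fold relatively independent self-joining $\lambda_k=\int_X\mu_x^{\otimes k}\,d\mu(x)$, which by positive entropy charges the off-diagonal and whose support furnishes $k$-tuples that are pairwise $\eta$-separated along a positive-density set of times, with $\eta$ a uniform constant governed by the diameter of an entropy set of $\mu$.

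The core is then a Kuratowski--Mycielski construction. For each fixed $k\ge2$ I would show that the collection of $k$-tuples $(x_1,\dots,x_k)$ for which $\liminf_N\frac1N\sum_{n=1}^N\max_{i<j}d(T^nx_i,T^nx_j)=0$ and $\limsup_N\frac1N\sum_{n=1}^N\min_{i<j}d(T^nx_i,T^nx_j)\ge\eta$ contains a dense $G_\delta$ subset of $E^k$ for a suitable Cantor set $E\subset\operatorname{supp}\mu_x$. The density of the stable sets from Lemma \ref{lem4} is what lets one force all $k$ orbits to cluster together in average along a subsequence of times of density tending to $1$, yielding the $\liminf=0$ condition; the independence/entropy-tuple structure of $\mu$ (equivalently, the support of $\lambda_k$) is what supplies the positive-density block of times at which the $k$ points are mutually $\eta$-separated, yielding the $\limsup\ge\eta$ condition. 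The Kuratowski--Mycielski theorem converts these residual conditions into a single dense Mycielski set all of whose finite tuples are scrambled, and a countable union over $k$ produces the required $C$.

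The main obstacle is the simultaneous realization of these two competing demands on one and the same tuple: stable sets naturally yield genuine asymptoticity, which would destroy the $\limsup$, so the inductive construction must interleave proximal windows with $\eta$-separated windows along carefully chosen time intervals while keeping both Cesàro estimates intact. The delicate point is to make this interleaving uniform across all $k$ and all admissible points, so that the separation constant $\eta$ depends only on the system and not on $k$ or on the chosen tuple, and to do so compatibly with the Kuratowski--Mycielski machinery. This balancing is precisely what is carried out in \cite{HLY}, of which Lemma \ref{lem2} is the stated conclusion.
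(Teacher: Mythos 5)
Your proposal is correct and matches the paper's treatment: the paper offers no proof of Lemma \ref{lem2} at all, simply quoting it as \cite[Theorem 1.1]{HLY}, and your primary move is the same citation. Your additional sketch of the internal argument of \cite{HLY} (variational principle, disintegration over the Pinsker algebra, density of stable sets in $\operatorname{supp}\mu_x$ via Lemma \ref{lem4}, and a Kuratowski--Mycielski construction) is a faithful outline of that reference, so nothing further is needed.
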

	
	\medskip
	Now we are able to finish the proof.
	\begin{proof}[Proof of Theorem \ref{m-thm1}]
		Since $h_{\mu}^{\vec{v}}(T)>0$, it follows from Theorem \ref{lem1} that $h_{\mu}^{\vec{v}}(T)=h_{\widetilde{\mu}}(\T)>0$, and hence $h_{top}(\T)>0$ by variational principle. According to Lemma \ref{lem2}, there exists a Mycieski subset $\widetilde{C}$ of $\widetilde{X}$ such that for any $k\ge 2$ and pairwise distinct points $(t_1,x_1),(t_2,x_2),\ldots,(t_k,x_k)\in \widetilde{C}$ such that
		\begin{equation}\label{1}
			\liminf_{N\to\infty}\frac{1}{N}\sum_{n=0}^{N-1}\max_{1\leq i<j\leq k} \widetilde{d}(\T^n(t_i,x_i),\T^n(t_j,x_j))=0
		\end{equation}
		and
		\begin{equation}\label{2}
			\limsup_{N\to\infty}\frac{1}{N}\sum_{n=0}^{N-1}\min_{1\leq i<j\leq k} \widetilde{d}(\T^n(t_i,x_i),\T^n(t_j,x_j))\geq \eta>0.
		\end{equation}
		Since the rotation $R_\b$ is isometric on $\O$ with respect to $\rho$, it follows from \eqref{1} that $t_i=t_j$, for any $1\le i,j\le k$. Thus, there exist  a Mycieski subset $C$ of $X$, and $t\in\O$ such that for any $k\ge 2$ and pairwise distinct points $x_1,x_2,\ldots,x_k\in C$ such that
		\begin{equation}\label{3}
			\liminf_{N\to\infty}\frac{1}{N}\sum_{n=0}^{N-1}\max_{1\leq i<j\leq k} {d}(\varphi(n,t)x_i,\varphi(n,t)x_j)=0
		\end{equation}
		and
		\begin{equation}\label{4}
			\limsup_{N\to\infty}\frac{1}{N}\sum_{n=0}^{N-1}\min_{1\leq i<j\leq k} {d}(\varphi(n,t)x_i,\varphi(n,t)x_j)\geq \eta>0.
		\end{equation}
		
		Using \eqref{3} and \eqref{4}, we will finish the proof of Theorem \ref{m-thm1}. We only prove the case of $b=1$, as other cases can be proved similarly.
		
		Firstly,	we show that \eqref{m-1} holds.
		Since $X$ is compact, we assume that $\mathrm{diam}(X)=1$,
		where $\mathrm{diam}(X)=\max\{d(x,y):x,y\in X\}$. As $T$ is continuous, for any $\epsilon\in(0,1)$, there exists  $\delta\in (0,\epsilon/4)$ such that for any $x,y\in X$ with $d(x,y)<\delta$, we have
		\begin{equation}\label{4.1}
			d(T^{(0,i)}x,T^{(0,i)}y)<\epsilon/4
		\end{equation}
		for each $i\in\{1,0,-1\}$.  By \eqref{3}, there exists $N>0$ such that 
		\[\frac{1}{N}\sum_{n=0}^{N-1}\max_{1\leq i<j\leq k} {d}(\varphi(n,t)x_i,\varphi(n,t)x_j)<\delta^2.\]
		Let $\mathcal{A}_t:=\{n\in\mathbb{Z}_+:\max_{1\leq i<j\leq k} {d}(\varphi(n,t)x_i,\varphi(n,t)x_j)\ge \delta\}.$ Then 
		$$\delta\cdot\frac{\#(\mathcal{A}_t\cap [0,N-1])}{N}\leq\frac{1}{N}\sum_{n=0}^{N-1}\max_{1\leq i<j\leq k} {d}(\varphi(n,t)x_i,\varphi(n,t)x_j)<\delta^2. $$ This implies that
		\begin{equation}\label{4.2}
			\frac{\#(\mathcal{A}_t\cap [0,N-1])}{N}<\delta.
		\end{equation} 
		Note that  $$\Lambda_N^{\vec{v}}(1)\subset\{(n,[n\b+t]-1),(n,[n\b+t]),(n,[n\b+t]+1)\}_{n=0}^{N-1}.$$
		Thus, by \eqref{4.1}, given any $m\notin \mathcal{A}_t$, if $n\in\mathbb{N}$ such that $(m,n)\in \Lambda_N^{\vec{v}}(1)$ then 
		\begin{equation}\label{14}
			\max_{1\leq i<j\leq k} {d}(T^{(m,n)}x_i,T^{(m,n)}x_j)<{\epsilon}/{4}.
		\end{equation}
		Therefore, by the assumption $\mathrm{diam}(X)=1$ and $\delta\in(0,\epsilon/4)$, one has
		\begin{align*}
			&\frac{1}{\#(\Lambda_N^{\vec{v}}(1))}\sum_{(m,n)\in\Lambda_N^{\vec{v}}(1)}\max_{1\leq i<j\leq k} d(T^{(m,n)}x_i,T^{(m,n)}x_j)\\
			\overset{\eqref{14}}\leq& \epsilon/4+2\mathrm{diam}(X)\cdot	\frac{\#(\mathcal{A}_t\cap [0,N-1])}{N}
			\overset{\eqref{4.2}}<\epsilon.
		\end{align*}
		Since $\e>0$ is arbitrary, we have proven that \eqref{m-1} holds when $b=1$.
		
		Now we show that \eqref{m-2} holds when $b=1$.
		Note that for any $t\in[0,1)$ and $n\in\mathbb{N}$, $(n,[n\beta+t])\in \Lambda^{\vec{v}}(1)$. Then by \eqref{4} one has
		\begin{equation*}\label{6}
			\begin{split}
				&\limsup_{N\to\infty}\frac{1}{\#(\Lambda_N^{\vec{v}}(1))}\sum_{(m,n)\in\Lambda_N^{\vec{v}}(1)}\min_{1\leq i<j\leq k} d(T^{(m,n)}x_i,T^{(m,n)}x_j)\\
				\geq&\limsup_{N\to\infty}\frac{1}{2N}\sum_{n=0}^{N-1}\min_{1\leq i<j\leq k} {d}(\varphi(n,t)x_i,\varphi(n,t)x_j)\geq \eta/2>0.
			\end{split}
		\end{equation*} 
		The proof is completed.
	\end{proof}
	\section{Directional entropy tuples for a measure}	
	To further study systems with positive entropy, entropy pair for $\mathbb{Z}$-t.d.s. was introduced by Blanchard \cite{B}. Then Huang and Ye \cite{HY} extended this to entropy tuples. Entropy pair  for a measure on a $\mathbb{Z}$-t.d.s. was introduced by Blanchard, Host, Maass, Martinez and Rudolph \cite{BHM}.  Recently, Park and Lee \cite{PL} introduced topological entropy pair for a $\mathbb{Z}^2$-t.d.s. 
	Corresponding to this work, we introduce directional entropy tuples for a measure on a $\mathbb{Z}^2$-t.d.s. and study many their properties in this section. 
	
	Let us begin with some notions.
	Given a $\mathbb{Z}^2$-t.d.s. $(X,T)$ and $\mu\in M(X,T)$, we obtain that a $\mathbb{Z}^2$-m.p.s. $(X,\mathcal{B}_X,\mu,T)$. Let $X^{(n)}$ be the cartesian product of $X$ with itself $n$ times and $T^{(n)}$ be the simultaneous action of $T$ in each coordinate of $X^{(n)}$. The product $\sigma$-algebra of $X^{(n)}$ is denoted by $\mathcal{B}_X^{(n)}$ and its diagonal by $\Delta_n(X)=\{(x,\ldots,x)\in X^{(n)}:x\in X\}.$ 
	\begin{defn}
		Let $(X,T)$ be a $\mathbb{Z}^2$-t.d.s., $\vec{v}=(1,\beta)\in\mathbb{R}^2$  be a direction vector and $n \geqslant 2$.
		\begin{enumerate}
			\item[(a)] A partition $\alpha=\left\{A_{1}, \ldots, A_{k}\right\}\in \mathcal{P}_X$ is said to be admissible with respect to $\left(x_{1}, \ldots, x_{n}\right) \in X^{(n)}$ if for each $1 \leq i \leq k$ there exists $j_{i}$ such that $x_{j_{i}} \notin \overline{A}_{i}$.
			\item[(b)] An $n$-tuple $\left(x_{1}, \ldots, x_{n}\right) \in X^{(n)}$ is called a $\vec{v}$-entropy $n$-tuple for $\mu \in M(X, T)$, if there exist $i\neq j$ such that $x_i\neq x_j$, and for any admissible  partition $\alpha\in\mathcal{P}_X$ with respect to $\left(x_{1}, \ldots, x_{n}\right)$, $h^{\vec{v}}_\mu(T, \alpha)>0$. Denote by $E_{n}^{\mu,\vec{v}}(X, T)$ the set of all $\vec{v}$-entropy $n$-tuples for $\mu$.
		\end{enumerate}
	\end{defn}
	\noindent Similar to the argument of entropy pair, we know that $E_n^{\mu,\vec{v}}(X,T)\cup \Delta_n(X)$ is a closed subset of $X^{(n)}$.

	To study directional entropy $n$-tuples, we define the measure $\lambda^{\vec{v}}_n(\mu)$ on $\mathcal{B}_X^{(n)}$ by  $$\lambda^{\vec{v}}_n(\mu)(\prod_{i=1}^nA_i)=\int_{X}\prod_{i=1}^n\mathbb{E}_{\mu}(1_{A_i}|\mathcal{P}_\mu^{\vec{v}})d\mu,$$ where $\mathcal{P}_\mu^{\vec{v}}$ is the directional Pinsker algebra of $(X,T)$. Following ideas in \cite{HY}, we can prove the following result.
	\begin{lem}\label{lem3}
		Let $(X,T)$ be a $\mathbb{Z}^2$-t.d.s., $\mu\in M(X,T)$ and $\vec{v}=(1,\beta)\in\mathbb{R}^2$ be a direction vector. If $\mathcal{U}=\{U_1,U_2,\ldots,U_n\}$ is a measurable cover of $X$ with $n\geq 2$, then the following statements are equivalent:
		\begin{itemize}
			\item[(a)] $\lambda^{\vec{v}}_n(\mu)\left(\prod_{i=1}^nU_i^c\right)> 0 $;
			\smallskip
			\item[(b)]for any  partition $\alpha\in\mathcal{P}_X$ finer than $\mathcal{U}$ as a cover, $h^{\vec{v}}_{\mu}(T,\alpha)>0$.
		\end{itemize}
		\begin{proof}
			(b) $\Rightarrow$ (a). Suppose to the contrary that for any partition $\alpha\in\mathcal{P}_X$ finer than $\mathcal{U}$ as a cover, $$h^{\vec{v}}_{\mu}(T,\alpha)>0 \quad\text{and}\quad \lambda^{\vec{v}}_n(\mu)(\prod_{i=1}^nU_i^c)= 0.$$ 
			Let $C_i=\{x\in X: \mathbb{E}_{\mu}(1_{U_i^c}|\mathcal{P}_\mu^{\vec{v}})>0\}\in \mathcal{P}_\mu^{\vec{v}}$ for $1\leq i\leq n$. Then $$\mu(U_i^c\setminus C_i)=\int_{C_i^c}\mathbb{E}_{\mu}(1_{U_i^c}|\mathcal{P}_\mu^{\vec{v}})d\mu=0.$$ Put $D_i=C_i\cup (U_i^c\setminus C_i)$.  Then $D_i\in \mathcal{P}_\mu^{\vec{v}}$ and $D_i^c\subset U_i$.
			
			For any $\textbf{s}=(s(1),s(2),\ldots,s(n))\in \{0,1\}^n$, let $D_{\textbf{s}}=\cap_{i=1}^nD_i\left(s(i)\right)$, where $D_i(0)=D_i$ and $D_i(1)=D_i^c$. Set $D_0^j=\left(\cap_{i=1}^nD_i\right)\cap\left(U_j\setminus\cup_{k=1}^{j-1}U_k \right)$ for $1\leq j\leq n$. Consider the measurable partition 
			$$\alpha=\left\{D_\textbf{s}:\textbf{s}\in\{0,1\}^n\setminus\{(0,\ldots,0)\}\right\}\cup\{D_0^1,\ldots,D_0^n\}.$$
			For any $\textbf{s}\in \{0,1\}^n\setminus \{(0,\ldots,0)\}$, we have $s(i)=1$ for some $1\leq i\leq n$, and thus $D_\textbf{s}\subset D_i^c\subset U_i$. By the construction of $D_0^j$,  we have $D_0^j\subset U_j$ for all $1\leq j\leq n$. Thus $\alpha$ is finer than $\mathcal{U}$ as a cover. However, since $\lambda^{\vec{v}}_n(\mu)(\prod_{i=1}^nU_i^c)= 0 $, we deduce $$\mu\left(\cap_{i=1}^nD_i\right)=\mu\left(\cap_{i=1}^nC_i\right)=0.$$ Thus we have $D_0^1,\ldots,D_0^n\in \mathcal{P}_\mu^{\vec{v}}$. It is also clear that $D_\textbf{s}\in \mathcal{P}_\mu^{\vec{v}}$ for all $\textbf{s}\in\{0,1\}^n\setminus\{(0,\ldots,0)\}$ since $D_1,\ldots,D_n\in \mathcal{P}_\mu^{\vec{v}}.$ Therefore, each element of $\alpha$ is $\mathcal{P}_\mu^{\vec{v}}$-measurable and hence $h^{\vec{v}}_{\mu}(T,\alpha)=0,$ which contradicts the hypothesis that $h^{\vec{v}}_{\mu}(T,\alpha)>0$.
			
			(a) $\Rightarrow$ (b). Assume $\lambda^{\vec{v}}_n(\mu)\left(\prod_{i=1}^nU_i^c\right)> 0 $. Without loss of generality, we may assume that any finite measurable partition $\alpha$ which is finer than $\mathcal{U}$ as a cover is of the type $\alpha=\{A_1,\ldots,A_n\}$ with $A_i\subset U_i$ for $1\leq i\leq n$. Let $\alpha$ be such a partition. We observe that 
			$$\int_X\prod_{i=1}^n\mathbb{E}_{\mu}(1_{A_i^c}|\mathcal{P}_\mu^{\vec{v}})d\mu\geq \int_X\prod_{i=1}^n\mathbb{E}_{\mu}(1_{U_i^c}|\mathcal{P}_\mu^{\vec{v}})d\mu=\lambda^{\vec{v}}_n(\mu)(\prod_{i=1}^nU_i^c)> 0.$$
			So $A_j\notin \mathcal{P}_\mu^{\vec{v}}$ for some $1\leq j\leq n$. Thus $$h^{\vec{v}}_{\mu}(T,\alpha)\ge h^{\vec{v}}_{\mu}(T,\{A_j,A_j^c\})>0.$$
			Now we finish the proof of Lemma \ref{lem3}.
		\end{proof}
	\end{lem}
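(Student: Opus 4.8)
The plan is to prove the two implications separately, using that $\lambda^{\vec{v}}_n(\mu)$ is the relatively independent self-joining of $\mu$ over the directional Pinsker factor $\mathcal{P}_\mu^{\vec{v}}$, so that the number $\lambda^{\vec{v}}_n(\mu)(\prod_i U_i^c)$ measures the simultaneous positivity of the conditional expectations $\mathbb{E}_\mu(1_{U_i^c}\mid\mathcal{P}_\mu^{\vec{v}})$. Two standing facts I would use repeatedly are the monotonicity $h^{\vec{v}}_\mu(T,\alpha)\ge h^{\vec{v}}_\mu(T,\gamma)$ whenever $\alpha$ refines $\gamma$ (immediate from the definition, since a finer partition only enlarges each $\bigvee_{(m,n)\in\Lambda_N^{\vec{v}}(b)}T^{-(m,n)}(\cdot)$ and hence its entropy), and the defining equivalence $A\in\mathcal{P}_\mu^{\vec{v}}\iff h^{\vec{v}}_\mu(T,\{A,A^c\})=0$. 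A preliminary reduction I would make is that it suffices to treat partitions $\alpha=\{A_1,\dots,A_n\}$ with $A_i\subseteq U_i$: any $\alpha$ finer than $\mathcal{U}$ can be coarsened to such a partition by assigning each of its atoms to a single cover element containing it, and by monotonicity the coarsening has no larger directional entropy.

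For $(a)\Rightarrow(b)$, I would take a reduced partition $\alpha=\{A_1,\dots,A_n\}$ with $A_i\subseteq U_i$. Since then $A_i^c\supseteq U_i^c$, monotonicity of conditional expectations yields
\[
\lambda^{\vec{v}}_n(\mu)\Big(\prod_{i=1}^n A_i^c\Big)=\int_X\prod_{i=1}^n\mathbb{E}_\mu(1_{A_i^c}\mid\mathcal{P}_\mu^{\vec{v}})\,d\mu\ \ge\ \int_X\prod_{i=1}^n\mathbb{E}_\mu(1_{U_i^c}\mid\mathcal{P}_\mu^{\vec{v}})\,d\mu=\lambda^{\vec{v}}_n(\mu)\Big(\prod_{i=1}^n U_i^c\Big)>0 .
\]
Were every $A_i$ in $\mathcal{P}_\mu^{\vec{v}}$, each conditional expectation would equal $1_{A_i^c}$ and the product would be $1_{\cap_i A_i^c}$; but $\{A_1,\dots,A_n\}$ is a partition, so $\cap_i A_i^c=\emptyset$ and the integral would vanish, a contradiction. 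Hence some $A_j\notin\mathcal{P}_\mu^{\vec{v}}$, i.e. $h^{\vec{v}}_\mu(T,\{A_j,A_j^c\})>0$, and monotonicity gives $h^{\vec{v}}_\mu(T,\alpha)>0$.

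For $(b)\Rightarrow(a)$ I would argue by contraposition: assuming $\lambda^{\vec{v}}_n(\mu)(\prod_i U_i^c)=0$, the goal is to exhibit one partition finer than $\mathcal{U}$ all of whose atoms lie in $\mathcal{P}_\mu^{\vec{v}}$, which forces its directional entropy to be zero. I would set $C_i=\{\mathbb{E}_\mu(1_{U_i^c}\mid\mathcal{P}_\mu^{\vec{v}})>0\}\in\mathcal{P}_\mu^{\vec{v}}$, note $U_i^c\subseteq C_i$ up to a $\mu$-null set, and put $D_i:=C_i\cup U_i^c\in\mathcal{P}_\mu^{\vec{v}}$, so that $D_i^c\subseteq U_i$. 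The vanishing hypothesis says the nonnegative function $\prod_i\mathbb{E}_\mu(1_{U_i^c}\mid\mathcal{P}_\mu^{\vec{v}})$ is $0$ almost everywhere, which is exactly $\mu(\cap_i C_i)=0$, hence $\mu(\cap_i D_i)=0$. Then I would take as atoms the nonempty Boolean cells $D_{\mathbf{s}}=\cap_i D_i(s(i))$ for $\mathbf{s}\in\{0,1\}^n\setminus\{\mathbf{0}\}$ (with $D_i(0)=D_i$, $D_i(1)=D_i^c$), together with a splitting $D_0^j=(\cap_i D_i)\cap(U_j\setminus\bigcup_{k<j}U_k)$ of the leftover cell. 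Each $D_{\mathbf{s}}$ with $s(i)=1$ sits in $D_i^c\subseteq U_i$ and each $D_0^j\subseteq U_j$, so this partition refines $\mathcal{U}$; every $D_{\mathbf{s}}$ is $\mathcal{P}_\mu^{\vec{v}}$-measurable because the $D_i$ are, while each $D_0^j$ is $\mu$-null and therefore lies in the completed algebra $\mathcal{P}_\mu^{\vec{v}}$. Thus all atoms are $\mathcal{P}_\mu^{\vec{v}}$-measurable and the partition has zero directional entropy, contradicting $(b)$.

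The step I expect to be the main obstacle is the construction in $(b)\Rightarrow(a)$, where one must simultaneously arrange that every atom refines $\mathcal{U}$ and that every atom is $\mathcal{P}_\mu^{\vec{v}}$-measurable. The delicate point is the diagonal cell $\cap_i D_i$, which need not be contained in any single $U_i$; the vanishing hypothesis is used precisely to show this cell is $\mu$-null, after which it may be cut arbitrarily along $U_j\setminus\bigcup_{k<j}U_k$ into null, hence $\mathcal{P}_\mu^{\vec{v}}$-measurable, pieces that respect the cover. The remaining verification---that the listed cells genuinely partition $X$ (disjointness being automatic, and the union being $X$ because $\bigcup_j(U_j\setminus\bigcup_{k<j}U_k)=\bigcup_j U_j=X$ as $\mathcal{U}$ covers $X$)---is routine.
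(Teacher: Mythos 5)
Your proposal is correct and follows essentially the same route as the paper: the same reduction to partitions $\alpha=\{A_1,\dots,A_n\}$ with $A_i\subseteq U_i$ and the conditional-expectation inequality for (a)$\Rightarrow$(b), and the same construction of the sets $C_i$, $D_i$, the Boolean cells $D_{\mathbf{s}}$ and the null pieces $D_0^j$ for the other implication. The only differences are cosmetic (contraposition versus contradiction, and you spell out why some $A_j\notin\mathcal{P}_\mu^{\vec{v}}$ via $\bigcap_i A_i^c=\emptyset$, a step the paper leaves implicit).
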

	Taking advantage of Lemma \ref{lem3}, we describe directional entropy tuples via the support of measure $\lambda^{\vec{v}}_n(\mu)$. We remark that the corresponding result for $\mathbb{Z}$-actions is proved in \cite{HY}.
	
	\begin{thm}\label{thm1}
		Let $(X,T)$ be a $\mathbb{Z}^2$-t.d.s., $\mu\in M(X,T)$ and $\vec{v}=(1,\beta)\in\mathbb{R}^2$ be a direction vector. Then for any $n\geq 2$, one has
		$$E_n^{\mu,\vec{v}}(X,T)=\operatorname{supp}(\lambda^{\vec{v}}_n(\mu))\setminus \Delta_n(X).$$
		\begin{proof}
			First we prove that $$E_n^{\mu,\vec{v}}(X,T)\subset \operatorname{supp}(\lambda^{\vec{v}}_n(\mu))\setminus \Delta_n(X).$$ Let $(x_i)_{i=1}^n\in E_n^{\mu,\vec{v}}(X,T)$. It suffices to prove that for any neighborhood $\prod_{i=1}^nU_i$ of $(x_i)_{i=1}^n$, $\lambda^{\vec{v}}_n(\mu)\left(\prod_{i=1}^nU_i\right)> 0.$
			Let $\mathcal{U}=\{U_1^c,U_2^c,\ldots,U_n^c\}$. It is clear that any partition $\alpha\in\mathcal{P}_X$ finer than $\mathcal{U}$ as a cover is an admissible partition with respect to $(x_i)_{i=1}^n$. Therefore,  $h^{\vec{v}}_{\mu}(T,\alpha)>0$. By Lemma \ref{lem3}, we obtain that $\lambda^{\vec{v}}_n(\mu)\left(\prod_{i=1}^nU_i\right)> 0,$ which implies that $(x_i)_{i=1}^n\in \operatorname{supp}(\lambda^{\vec{v}}_n(\mu))\setminus \Delta_n(X)$. As $(x_i)_{i=1}^n$ is arbitrary, $E_n^{\mu,\vec{v}}(X,T)\subset \operatorname{supp}(\lambda^{\vec{v}}_n(\mu))\setminus \Delta_n(X).$
			
			Next we prove that $$\operatorname{supp}(\lambda^{\vec{v}}_n(\mu))\setminus \Delta_n(X)\subset E_n^{\mu,\vec{v}}(X,T).$$  Let $(x_i)_{i=1}^n\in \operatorname{supp}(\lambda^{\vec{v}}_n(\mu))\setminus \Delta_n(X)$. We need to show that for any admissible partition $\alpha=\{A_1,\ldots,A_k\}$ with respect to $(x_i)_{i=1}^n$,  $h^{\vec{v}}_{\mu}(T,\alpha)>0$. Since $\alpha$ is an admissible partition with respect to $(x_i)_{i=1}^n$, there exist closed neighborhoods $U_i$ of $x_i$ for all $1\leq i\leq n$ such that for each $j\in\{1,2,\ldots,k\}$, there exists $i_j\in\{1,2,\ldots,n\}$ with $A_j\subset U_{i_j}^c$. Hence $\alpha$ is finer than $\mathcal{U}=\{U_1^c,U_2^c,\ldots,U_n^c\}$ as a cover. Since $\lambda^{\vec{v}}_n(\mu)\left(\prod_{i=1}^nU_i\right)> 0$, by Lemma \ref{lem3},  $h^{\vec{v}}_{\mu}(T,\alpha)>0$. This implies that $(x_i)_{i=1}^n\in E_n^{\mu,\vec{v}}(X,T)$. As $(x_i)_{i=1}^n$ is arbitrary, $ \operatorname{supp}(\lambda^{\vec{v}}_n(\mu))\setminus \Delta_n(X)\subset E_n^{\mu,\vec{v}}(X,T).$
		\end{proof}
	\end{thm}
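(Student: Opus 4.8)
The plan is to establish the two inclusions separately, using Lemma \ref{lem3} as the sole bridge between the dynamical description of an entropy tuple (admissible partitions together with positivity of $h^{\vec{v}}_\mu(T,\cdot)$) and the measure-theoretic description (positivity of $\lambda^{\vec{v}}_n(\mu)$ on products). The mechanism is that applying Lemma \ref{lem3} to the cover $\{U_1^c,\ldots,U_n^c\}$ interchanges the assertions ``$\lambda^{\vec{v}}_n(\mu)(\prod_i U_i)>0$'' and ``every finite partition refining $\{U_1^c,\ldots,U_n^c\}$ has positive directional entropy.'' This is exactly the translation used for $\mathbb{Z}$-actions in \cite{HY}, so once Lemma \ref{lem3} furnishes the directional analogue, the argument should go through verbatim. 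The exclusion of $\Delta_n(X)$ is automatic on both sides: an entropy tuple is off-diagonal by definition, and the right-hand side removes $\Delta_n(X)$ explicitly.

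For $E_n^{\mu,\vec{v}}(X,T)\subseteq\operatorname{supp}(\lambda^{\vec{v}}_n(\mu))\setminus\Delta_n(X)$, I would fix $(x_1,\ldots,x_n)\in E_n^{\mu,\vec{v}}(X,T)$ and a basic product neighborhood $\prod_i U_i$ with each $U_i$ an open neighborhood of $x_i$; it suffices to show $\lambda^{\vec{v}}_n(\mu)(\prod_i U_i)>0$. Setting $\mathcal{U}=\{U_1^c,\ldots,U_n^c\}$, I would observe that any finite partition $\alpha$ finer than $\mathcal{U}$ is admissible with respect to $(x_1,\ldots,x_n)$: each atom $A$ lies in some $U_i^c$, and since $U_i$ is open the closed set $U_i^c$ contains $\overline A$ but not $x_i$. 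The entropy-tuple hypothesis then gives $h^{\vec{v}}_\mu(T,\alpha)>0$ for every such $\alpha$, so Lemma \ref{lem3} in the form (b)$\Rightarrow$(a) yields $\lambda^{\vec{v}}_n(\mu)(\prod_i U_i)>0$.

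For the reverse inclusion, I would fix $(x_1,\ldots,x_n)\in\operatorname{supp}(\lambda^{\vec{v}}_n(\mu))\setminus\Delta_n(X)$ and an arbitrary admissible partition $\alpha=\{A_1,\ldots,A_k\}$, aiming at $h^{\vec{v}}_\mu(T,\alpha)>0$. Admissibility supplies, for each atom $A_j$, an index $i_j$ with $x_{i_j}\notin\overline{A_j}$; grouping the atoms by their assigned index and using that a finite union of closed sets missing $x_i$ still misses $x_i$, I would produce closed neighborhoods $U_i$ of $x_i$ with $A_j\subseteq U_{i_j}^c$ for all $j$. Then $\alpha$ refines $\{U_1^c,\ldots,U_n^c\}$, and since $(x_1,\ldots,x_n)$ lies in the support, the open neighborhood $\prod_i\operatorname{int}(U_i)$ of the tuple has positive measure, whence $\lambda^{\vec{v}}_n(\mu)(\prod_i U_i)>0$. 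Lemma \ref{lem3} in the form (a)$\Rightarrow$(b) then gives $h^{\vec{v}}_\mu(T,\alpha)>0$, so $(x_1,\ldots,x_n)$ is an entropy tuple.

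The point requiring care is the bookkeeping that makes Lemma \ref{lem3} applicable, namely that $\{U_1^c,\ldots,U_n^c\}$ is a genuine cover of $X$. In the reverse direction this is free, since $\bigcup_i U_i^c\supseteq\bigcup_j A_j=X$. In the forward direction it is where I would invoke the off-diagonal hypothesis: because the coordinates are not all equal, I may shrink the $U_i$ so that $\bigcap_i U_i=\emptyset$ (group coordinates by common value and take disjoint neighborhoods of distinct values), so that $\bigcup_i U_i^c=X$; monotonicity of $\lambda^{\vec{v}}_n(\mu)$ reduces the support condition to these small neighborhoods. Beyond this, the only routine matters are the equivalence between ``admissible'' and ``refines $\{U_i^c\}$'' and the passage from closed to open neighborhoods when reading positivity off the support.
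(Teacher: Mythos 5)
Your proposal is correct and follows essentially the same route as the paper's proof: both inclusions are reduced to Lemma \ref{lem3} applied to the cover $\{U_1^c,\ldots,U_n^c\}$, with admissibility and refinement translated into one another exactly as you describe. The only difference is that you explicitly verify that $\{U_1^c,\ldots,U_n^c\}$ actually covers $X$ in the forward direction by shrinking the $U_i$ via the off-diagonal hypothesis, a hypothesis-check the paper leaves implicit.
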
 
	\section{Directional stable sets and asymptotic n-tuples}
	In this section, we introduce directional stable (unstable) sets, and asymptotic $n$-tuples, and prove our second application, that is, a characterization of the number of directional asymptotic $n$-tuples in a $\mathbb{Z}^2$-system with positive directional entropy.

	We define $\vec{v}$-stable set of a point $x\in X$ as
	\begin{equation*}
		\begin{split}
			W^{\vec{v},b}_s&(x, T)=\{y \in X: \lim _{k \to\infty} d\left(T^{(m_k,n_k)} x, T^{(m_k,n_k)} y\right)=0\\
			&\text{ for any infinite sequence } \{(m_k,n_k)\}_{k=1}^\infty \text{ of }  \Lambda^{\vec{v}}(b)\text{ with } \{m_k\}_{k=1}^\infty \text{ increasing}\}
		\end{split}
	\end{equation*}
	and the $\vec{v}$-unstable set of $x\in X$  as
	\begin{equation*}
		\begin{split}
			W^{\vec{v},b}_u&(x, T)=\{y \in X: \lim _{k \to\infty} d\left(T^{(m_k,n_k)} x, T^{(m_k,n_k)} y\right)=0\\
			&\text{ for any infinite sequence } \{(m_k,n_k)\}_{k=1}^\infty \text{ of }  \Lambda^{\vec{v}}(b) \text{ with } \{m_k\}_{k=1}^\infty \text{ decreasing}\}.
		\end{split}
	\end{equation*}
	By definitions, it is clear that $W^{\vec{v},b}_s(x, T)=W^{\vec{v},b}_u(x, T^{-1})$ and $W^{\vec{v},b}_u(x, T)=W^{\vec{v},b}_s(x, T^{-1})$, where $(T^{-1})^{(m,n)}=T^{-(m,n)}$, $(m,n)\in\mathbb{Z}^2$. The following result shows that the definitions of directional stable and unstable sets are independent of the choice of $b\in(0,\infty)$. Thus, we omit $b$ to write $W^{\vec{v},b}_s(x, T)$ and $W^{\vec{v},b}_u(x, T)$ as $W^{\vec{v}}_s(x, T)$ and $W^{\vec{v}}_u(x, T)$, respectively.
	
	\begin{prop}\label{prop-1}
		For any $b_1,b_2\in(0,\infty)$, \[W^{\vec{v},b_1}_s(x, T)=W^{\vec{v},b_2}_s(x, T)\text{ and }  W^{\vec{v},b_1}_u(x, T)=W^{\vec{v},b_2}_u(x, T).\]
	\end{prop}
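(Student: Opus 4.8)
The plan is to show that both stable sets coincide by establishing mutual inclusion, and by symmetry (since $W^{\vec{v},b}_u(x,T) = W^{\vec{v},b}_s(x,T^{-1})$) it suffices to treat the stable case. The key geometric observation is that the cones $\Lambda^{\vec{v}}(b_1)$ and $\Lambda^{\vec{v}}(b_2)$, while of different widths, are \emph{commensurable} in the precise sense already exploited in the discussion preceding Proposition \ref{prop4}: for $b_1 > b_2$ there is a finite set $C \subset \mathbb{Z}^2$ with $\Lambda^{\vec{v}}(b_1) \subset \bigcup_{(m',n')\in C}\big((m',n') + \Lambda^{\vec{v}}(b_2)\big)$.

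First I would prove the easy inclusion. If $b_1 > b_2$, then $\Lambda^{\vec{v}}(b_2) \subset \Lambda^{\vec{v}}(b_1)$, so any increasing sequence $\{(m_k,n_k)\}$ inside $\Lambda^{\vec{v}}(b_2)$ automatically lies in $\Lambda^{\vec{v}}(b_1)$. Consequently the defining convergence condition for $W^{\vec{v},b_1}_s(x,T)$ is a demand over a \emph{larger} family of sequences, whence $W^{\vec{v},b_1}_s(x,T) \subset W^{\vec{v},b_2}_s(x,T)$.

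The substantive direction is $W^{\vec{v},b_2}_s(x,T) \subset W^{\vec{v},b_1}_s(x,T)$, and this is where the finite-cover fact does the work. Fix $y \in W^{\vec{v},b_2}_s(x,T)$ and an arbitrary increasing sequence $\{(m_k,n_k)\} \subset \Lambda^{\vec{v}}(b_1)$; I must show $d(T^{(m_k,n_k)}x, T^{(m_k,n_k)}y) \to 0$. Using the finite set $C$, each $(m_k,n_k)$ can be written as $(m_k,n_k) = c_k + (p_k,q_k)$ with $c_k \in C$ and $(p_k,q_k) \in \Lambda^{\vec{v}}(b_2)$. Since $C$ is finite, by passing to finitely many subsequences I may assume $c_k = c$ is constant along each subsequence, so that $(p_k,q_k) = (m_k - c^{(1)}, n_k - c^{(2)})$ still has first coordinate increasing (the horizontal shift by the fixed $c$ preserves strict monotonicity for large $k$). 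Then $\{(p_k,q_k)\}$ is an increasing sequence in $\Lambda^{\vec{v}}(b_2)$, so by hypothesis $d(T^{(p_k,q_k)}x, T^{(p_k,q_k)}y) \to 0$, and applying the single continuous homeomorphism $T^{c}$—which is uniformly continuous on the compact space $X$—gives $d(T^{(m_k,n_k)}x, T^{(m_k,n_k)}y) = d(T^{c}T^{(p_k,q_k)}x, T^{c}T^{(p_k,q_k)}y) \to 0$. Reassembling the finitely many subsequences yields convergence along the full sequence, so $y \in W^{\vec{v},b_1}_s(x,T)$.

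The main obstacle I anticipate is bookkeeping the reduction to constant $c_k$ cleanly: one must argue that convergence along each of the finitely many subsequences (indexed by the value of $c \in C$) combines to give convergence along the original sequence, and that strict increase of $\{m_k\}$ survives the translation by a fixed vector. Both are routine once phrased correctly—any subsequence decomposition of an increasing sequence into finitely many pieces has each piece increasing, and a sequence converges iff it does so along each block of a finite partition of its index set—but they should be stated carefully to avoid the illusion of circularity. Uniform continuity of the single map $T^{c}$ (valid since $X$ is compact) is the ingredient that converts fiberwise convergence back to the original orbit, and it is essential that only \emph{one} fixed translation, not an unbounded family, is applied at this step.
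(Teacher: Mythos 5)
Your proposal is correct and follows essentially the same route as the paper's proof: the trivial inclusion $W^{\vec{v},b_1}_s(x,T)\subset W^{\vec{v},b_2}_s(x,T)$ from monotonicity of the cones, and the reverse inclusion from the finite translation cover $\Lambda^{\vec{v}}(b_1)\subset\bigcup_{(m',n')\in C}\left((m',n')+\Lambda^{\vec{v}}(b_2)\right)$ combined with compactness of $X$ and continuity of $T$. The paper compresses the subsequence bookkeeping into the phrase ``by the continuity of $T$ and the compactness of $X$''; your write-up simply supplies those details explicitly.
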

	\begin{proof}
		Fix $b_1,b_2\in(0,\infty)$. Without loss of generality, we assume that $b_1>b_2$. Since $W^{\vec{v},b}_u(x, T)=W^{\vec{v},b}_s(x, T^{-1})$ for any $b\in (0,\infty)$, we only need to prove $W^{\vec{v},b_1}_s(x, T)=W^{\vec{v},b_2}_s(x, T)$.
		
		By definition, it is clear that $W^{\vec{v},b_1}_s(x, T)\subset W^{\vec{v},b_2}_s(x, T).$ On the other hand, we can find a  finite subset $C$ of $\mathbb{Z}^2$ such that
		\begin{equation*}
			\Lambda^{\vec{v}}(b_1)\subset \bigcup_{(m',n')\in C}\lk((m',n')+\Lambda^{\vec{v}}(b_2)\re),
		\end{equation*}
		where $(m',n')+\Lambda^{\vec{v}}(b_2):=\{(m'+m,n'+n):(m,n)\in \Lambda^{\vec{v}}(b_2)\}$. Thus, by the continuity of $T$ and the compactness of $X$, one has
		$W^{\vec{v},b_2}_s(x, T)\subset W^{\vec{v},b_1}_s(x, T).$
		This ends the proof of Proposition \ref{prop-1}.
	\end{proof}
	Moreover, we denote the set of all $\vec{v}$-asymptotic n-tuples as
	\begin{equation*}
		\begin{split}
			Asy^{\vec{v}}_n(X,& T)=\{(x_1,\ldots,x_n) \in X^{(n)}: \lim _{k \to\infty} \max_{1\leq i<j\leq n}d\left(T^{(m_k,n_k)} x_i, T^{(m_k,n_k)} x_j\right)=0\\
			&\text{ for any infinite sequence } \{(m_k,n_k)\}_{k=1}^\infty \text{ of }  \Lambda^{\vec{v}}(b)\text{ with } \{m_k\}_{k=1}^\infty \text{ increasing}\}.
		\end{split}
	\end{equation*}
	Similar to the proof of Proposition \ref{prop-1}, the definition of $Asy^{\vec{v}}_n(X, T)$ is also independent of the choice of $b\in(0,\infty)$.
	
	Now, we provide a characterization of directional stable (unstable) sets via the  stable (unstable) sets of the skew-product system we constructed.  
	\begin{lem}\label{lem5}Let $(X,T)$ be a $\mathbb{Z}^2$-t.d.s., $\mu\in M(X,T)$, $\vec{v}=(1,\beta)\in\mathbb{R}^2$ be a direction vector and $(\widetilde{X},\widetilde{\mathcal{B}},\widetilde{\mu},\T)$ be the skew product system defined by \eqref{eq:skew product}. Then for any $(t,x)\in \widetilde{X}$,
		\[W^s((t,x),\T)=\{t\}\times W_s^{\vec{v}}(x,T) \text{ and } W^u((t,x),\T)=\{t\}\times W_u^{\vec{v}}(x,T).\]
	\end{lem}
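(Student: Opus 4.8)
The plan is to unwind the definition of the skew product $\T$ and show directly that the stable set splits as a product, using the explicit formula for iterates of $\T$ together with the isometry of the base rotation. The key observation is that iterating $\T$ gives
\[
\T^k(t,x)=(R_\b^k t,\varphi(k,t)x),
\]
where $\varphi(k,t)=T^{(k,[(k-1)\b+t]+\ldots)}$ unwinds to $T^{(k,n_k(t))}$ for an explicit integer $n_k(t)$ determined by the orbit of $t$ under $R_\b$. The first step is to record this formula and to note that, by telescoping the floor functions, the second coordinate of $\T^k(t,x)$ is exactly $T^{(k,n_k)}x$ where $(k,n_k)$ is the unique lattice point of the form $(k,[k\b+t])$-type lying in the strip $\Lambda^{\vec{v}}(b)$.

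Next I would prove the inclusion $W^s((t,x),\T)\subset\{t\}\times W_s^{\vec{v}}(x,T)$. If $(s,y)\in W^s((t,x),\T)$, then $\widetilde d(\T^k(t,x),\T^k(s,y))\to0$; since $\widetilde d$ dominates $\rho$ and $R_\b$ is an isometry, $\rho(R_\b^k t,R_\b^k s)=\rho(t,s)$ is constant, forcing $s=t$ (this is precisely the argument already used for \eqref{11} in the proof of Theorem \ref{thm2}). Then the $X$-coordinate convergence $d(\varphi(k,t)x,\varphi(k,t)y)\to0$ reads as $d(T^{(k,n_k)}x,T^{(k,n_k)}y)\to0$ along the sequence $(k,n_k)\in\Lambda^{\vec{v}}(b)$ with first coordinates strictly increasing. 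To conclude $y\in W_s^{\vec{v}}(x,T)$ I must upgrade convergence along this one canonical sequence to convergence along \emph{every} increasing sequence in the strip; this is handled exactly as in Proposition \ref{prop-1}, since any point of $\Lambda^{\vec{v}}(b)$ differs from a canonical point $(k,n_k)$ by one of finitely many bounded translates, and $T$ is uniformly continuous on the compact space $X$.

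For the reverse inclusion $\{t\}\times W_s^{\vec{v}}(x,T)\subset W^s((t,x),\T)$, if $y\in W_s^{\vec{v}}(x,T)$ then $d(T^{(m_k,n_k)}x,T^{(m_k,n_k)}y)\to0$ along \emph{all} increasing sequences in $\Lambda^{\vec{v}}(b)$, in particular along the canonical sequence realized by the iterates of $\T$; combined with $R_\b^k t=R_\b^k t$ this yields $\widetilde d(\T^k(t,x),\T^k(t,y))\to0$, so $(t,y)\in W^s((t,x),\T)$. The unstable statement follows by replacing $T$ with $T^{-1}$ and $\T$ with $\T^{-1}$, using the relations $W^{\vec{v}}_u(x,T)=W^{\vec{v}}_s(x,T^{-1})$ and $W^u((t,x),\T)=W^s((t,x),\T^{-1})$ recorded earlier, noting that the skew product associated to $T^{-1}$ along the same direction has base rotation $R_{-\b}$ which is again an isometry.

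The main obstacle I anticipate is the bookkeeping of the floor functions in $\varphi(k,t)$: one must verify carefully that the second coordinate $n_k(t)$ of $\T^k(t,x)$ equals the correct integer $[k\b+t]$ (up to the $O(1)$ discrepancy inherent in the strip), so that the orbit $\{\T^k(t,x)\}_k$ genuinely tracks a directional orbit inside $\Lambda^{\vec{v}}(b)$ and, conversely, so that the canonical directional sequence is actually realized by the skew-product dynamics. Once this identification is pinned down, the equivalence of the two stable-set notions reduces to the already-proven independence of $b$ (Proposition \ref{prop-1}) together with uniform continuity, and no further substantive difficulty remains.
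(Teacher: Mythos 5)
Your proposal is correct and follows essentially the same route as the paper: the isometry of the base rotation forces the first coordinates to agree (this is exactly equation \eqref{11} in the proof of Theorem \ref{thm2}, which the paper cites), and the identification of the fiber $F_x$ with $W_s^{\vec{v}}(x,T)$ is precisely the ``standard discussion'' the paper leaves implicit, which you carry out via the telescoped formula $\varphi(k,t)=T^{(k,[k\b+t])}$ together with the bounded-translate and uniform-continuity argument of Proposition \ref{prop-1}. No gaps.
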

	\begin{proof}
		By \eqref{11}, one has for any $x\in X$, there exists a subset $F_x$ of $X$ such that for any $t\in\O$, \[W^{s}((t,x), \T)=\{t\}\times F_x.\]
		Hence we only need to prove that $W_s^{\vec{v}}(x,T)= F_x$. Indeed, this can be obtained by a standard discussion, as $X$ is compact, and $T$ is continuous. 
	\end{proof}
	
	By Lemma \ref{lem5}, we can prove a directional version of Lemma \ref{lem4}.
	\begin{thm}\label{cor1}
		Let $(X,T)$ be a $\mathbb{Z}^2$-t.d.s., $\vec{v}=(1,\beta)\in\mathbb{R}^2$ be a direction vector and $\mu\in M(X,T)$ such that $h_{\mu}^{\vec{v}}(T)>0$. If $\mu=\int_{X} \mu_{x} d \mu(x)$
		is the disintegration of $\mu$ over the  directional Pinsker algebra $\mathcal{P}^{\vec{v}}_{\mu}$, then for $\mu$-a.e. $x \in X$,
		$$
		\overline{W^{\vec{v}}_s(x, T) \cap \operatorname{supp}\mu_{x}}=\operatorname{supp}\mu_{x} \quad \text { and } \quad \overline{W^{\vec{v}}_u(x, T) \cap \operatorname{supp}\mu_{x}}=\operatorname{supp}\mu_{x}.
		$$
	\end{thm}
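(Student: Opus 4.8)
The plan is to transfer the conclusion of Lemma~\ref{lem4}, which holds for the $\mathbb{Z}$-system $(\widetilde X,\T)$, down to the original $\mathbb{Z}^2$-system along the direction $\vec v$, using the structural results already established about the skew product. The two facts doing all the work are Theorem~\ref{thm2}, which identifies the disintegration over the Pinsker algebra of $(\widetilde X,\T)$ as $\widetilde\mu_{(t,x)}=\delta_t\times\mu_x$, and Lemma~\ref{lem5}, which identifies the $\T$-stable and $\T$-unstable sets fiberwise as $W^s((t,x),\T)=\{t\}\times W^{\vec v}_s(x,T)$ and $W^u((t,x),\T)=\{t\}\times W^{\vec v}_u(x,T)$.

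First I would check that the hypotheses of Lemma~\ref{lem4} are met for $(\widetilde X,\T,\widetilde\mu)$. Since $h^{\vec v}_\mu(T)>0$, Theorem~\ref{lem1} gives $h_{\widetilde\mu}(\T)=h^{\vec v}_\mu(T)>0$, so Lemma~\ref{lem4} applies to the $\mathbb{Z}$-m.p.s.\ $(\widetilde X,\widetilde{\mathcal B},\widetilde\mu,\T)$. It yields, for $\widetilde\mu$-a.e.\ $(t,x)$,
\[
\overline{W^s((t,x),\T)\cap\operatorname{supp}\widetilde\mu_{(t,x)}}=\operatorname{supp}\widetilde\mu_{(t,x)}
\quad\text{and}\quad
\overline{W^u((t,x),\T)\cap\operatorname{supp}\widetilde\mu_{(t,x)}}=\operatorname{supp}\widetilde\mu_{(t,x)}.
\]
Next I would substitute the two identifications into this equation. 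By Theorem~\ref{thm2}, $\operatorname{supp}\widetilde\mu_{(t,x)}=\operatorname{supp}(\delta_t\times\mu_x)=\{t\}\times\operatorname{supp}\mu_x$, and by Lemma~\ref{lem5} the stable set is $\{t\}\times W^{\vec v}_s(x,T)$. Intersecting, $W^s((t,x),\T)\cap\operatorname{supp}\widetilde\mu_{(t,x)}=\{t\}\times\bigl(W^{\vec v}_s(x,T)\cap\operatorname{supp}\mu_x\bigr)$, and because the first coordinate $\{t\}$ is a single point its closure does not interfere, so the closure of the product equals $\{t\}\times\overline{W^{\vec v}_s(x,T)\cap\operatorname{supp}\mu_x}$. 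Matching this against $\{t\}\times\operatorname{supp}\mu_x$ and cancelling the common factor gives $\overline{W^{\vec v}_s(x,T)\cap\operatorname{supp}\mu_x}=\operatorname{supp}\mu_x$; the unstable case is identical.

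The one point that needs genuine care, rather than routine bookkeeping, is the passage from the \emph{$\widetilde\mu$-a.e.\ $(t,x)$} quantifier to the desired \emph{$\mu$-a.e.\ $x$} quantifier. The exceptional set $E\subset\widetilde X$ on which Lemma~\ref{lem4} fails has $\widetilde\mu(E)=0$; since $\widetilde\mu=m\times\mu$ with $m$ Lebesgue measure on $\O=[0,1)$, Fubini gives that for $\mu$-a.e.\ $x$ the slice $E_x=\{t:(t,x)\in E\}$ has $m$-measure zero, hence is in particular a proper subset of $\O$, so the conclusion holds for at least one $t$ (indeed $m$-a.e.\ $t$). For such an $x$ fix any admissible $t$; the resulting identity $\overline{W^{\vec v}_s(x,T)\cap\operatorname{supp}\mu_x}=\operatorname{supp}\mu_x$ no longer mentions $t$, so it is a statement purely about $x$ and holds for $\mu$-a.e.\ $x$. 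This is the step I expect to require the most explicit justification; everything else is a direct substitution of the structural lemmas.
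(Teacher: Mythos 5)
Your proposal is correct and follows essentially the same route as the paper: apply Lemma~\ref{lem4} to the skew product (whose entropy is positive by Theorem~\ref{lem1}), then substitute the identifications $\widetilde\mu_{(t,x)}=\delta_t\times\mu_x$ from Theorem~\ref{thm2} and $W^s((t,x),\T)=\{t\}\times W^{\vec v}_s(x,T)$ from Lemma~\ref{lem5}, and descend from the $\widetilde\mu$-a.e.\ statement to the $\mu$-a.e.\ statement. The Fubini step you flag is exactly the point the paper compresses into ``which implies that,'' so spelling it out is a reasonable addition rather than a deviation.
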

	\begin{proof} 
		We only need to show the first equation since $W^{\vec{v}}_u(x, T)=W^{\vec{v}}_s(x, T^{-1})$. According to Theorem \ref{lem1}, one has $h_{\widetilde{\mu}}(\T)=h_{\mu}^{\vec{v}}(T)>0$.
		Thus, by Lemma \ref{lem4}, for $\widetilde{\mu}$-a.e. $(t,x)\in\widetilde{X}$,
		\begin{equation}\label{12}
			\overline{W^{s}((t,x), \T) \cap \operatorname{supp}\widetilde{\mu}_{(t,x)}}=\operatorname{supp}\widetilde{\mu}_{(t,x)}. 
		\end{equation}
		By Theorem \ref{thm2} one has 
		\begin{equation}\label{32}
			\operatorname{supp}\widetilde\mu_{(t,x)}=\{t\}\times\operatorname{supp}\mu_{x},\text{ for $\widetilde{\mu}$-a.e. $(t,x)\in\widetilde{X}$.}
		\end{equation}
		Combining \eqref{12}, \eqref{32} and Lemma \ref{lem5},
		one has for $\widetilde{\mu}$-a.e. $(t,x)\in\widetilde{X}$,
		\[\overline{\lk(\{t\}\times W^{\vec{v}}_s(x, T)\re) \cap \lk(\{t\}\times\operatorname{supp}{\mu}_{x}\re)}=\{t\}\times\operatorname{supp}{\mu}_{x}, \]
		which implies that for $\mu$-a.e. $x\in X$,
		\[\overline{W^{\vec{v}}_s(x, T) \cap \operatorname{supp}{\mu}_{x}}=\operatorname{supp}{\mu}_{x}. \]
		This finishes the proof of Theorem \ref{cor1}.
	\end{proof}
	The following lemma will be used in the proof of the second application.
	\begin{lem}\label{lem7}
		Let $(X,T)$ be a $\mathbb{Z}^2$-t.d.s. and $\vec{v}=(1,\beta)\in\mathbb{R}^2$ be a direction vector. Given $\mu\in M^e(X,T)$ with $h_{\mu}^{\vec{v}}(T)>0$,  let $\mu=\int_{X} \mu_{x} d \mu(x)$ be the disintegration of $\mu$ over the  directional Pinsker algebra $\mathcal{P}_\mu^{\vec{v}}$. Then for $\mu$-a.e. $x\in X$, $\mu_x$ is non-atomic.
	\end{lem}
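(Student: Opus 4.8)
The plan is to transfer the question to the skew product system $(\widetilde X,\widetilde{\mathcal B},\widetilde\mu,\widetilde T)$, where it becomes a statement about an ordinary $\mathbb Z$-action, and then to use the ergodicity of $\mu$ under the full $\mathbb Z^2$-action to repair the one defect of this transfer. By Theorem \ref{thm2} the disintegration of $\widetilde\mu$ over $\mathcal P_{\widetilde\mu}$ is $\widetilde\mu_{(t,x)}=\delta_t\times\mu_x$; since an atom of $\delta_t\times\mu_x$ is a point $(t,y)$ with $\mu_x(\{y\})>0$, the measure $\widetilde\mu_{(t,x)}$ is non-atomic if and only if $\mu_x$ is non-atomic. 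By Theorem \ref{lem1} we have $h_{\widetilde\mu}(\widetilde T)=h^{\vec v}_\mu(T)>0$. So it suffices to show that $\widetilde\mu_{(t,x)}$ is non-atomic for $\widetilde\mu$-a.e. $(t,x)$, and the natural tool is the classical fact that for an \emph{ergodic} $\mathbb Z$-m.p.s. of positive entropy the conditional measures over the Pinsker $\sigma$-algebra are non-atomic a.e. (a companion to Lemma \ref{lem4}, proved through the relative weak mixing of the Pinsker extension, i.e. via the relatively independent self-joining over the Pinsker factor).

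The hard part, and the main obstacle, is that $\widetilde\mu$ need not be ergodic for $\widetilde T$ even though $\mu$ is ergodic for $T$: the skew product over the irrational rotation may split into several ergodic components of different entropies, and on the zero-entropy components the conditional measures are point masses, so the classical theorem cannot be applied to $\widetilde\mu$ as it stands. To circumvent this I would first establish an all-or-nothing dichotomy straight from the $\mathbb Z^2$-ergodicity of $\mu$. The key point is that $\mathcal P^{\vec v}_\mu$ is invariant under $T^{(m,n)}$ for every $(m,n)\in\mathbb Z^2$: translating a partition by $T^{-(m,n)}$ only shifts the strip $\Lambda^{\vec v}_N(b)$ by a bounded amount and alters its intercept, and this change is absorbed by enlarging $b$ in the supremum defining $h^{\vec v}_\mu(T,\cdot)$, using the covering property of the strips $\Lambda^{\vec v}(b)$ recorded just before Proposition \ref{prop4}. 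Hence the disintegration is equivariant, $T^{(m,n)}_*\mu_x=\mu_{T^{(m,n)}x}$ for a.e. $x$, and the largest-atom function $g(x):=\max_{y\in X}\mu_x(\{y\})$ (the maximum is attained since the atom masses are summable, and $g\equiv 0$ means non-atomic) satisfies $g(T^{(m,n)}x)=g(x)$. By ergodicity of $\mu$ it is therefore constant, say $g\equiv c$.

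It remains to rule out $c>0$. Assuming $c>0$, Theorem \ref{thm2} gives that the largest-atom function of $\widetilde\mu_{(t,x)}$ equals $c$ for $\widetilde\mu$-a.e. $(t,x)$, hence also $\widetilde\mu_\omega$-a.e. for almost every ergodic component $\widetilde\mu_\omega$ of $\widetilde\mu$. Since $h_{\widetilde\mu}(\widetilde T)>0$, at least one component $\widetilde\mu_\omega$ has positive entropy; applying the classical ergodic theorem to $\widetilde\mu_\omega$ (using that the Pinsker $\sigma$-algebra of $\widetilde\mu$ restricts to the Pinsker $\sigma$-algebra of each ergodic component) shows that its conditional measures over the Pinsker algebra are non-atomic, so the largest-atom function vanishes $\widetilde\mu_\omega$-a.e., forcing $c=0$, a contradiction. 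Therefore $c=0$, i.e. $\mu_x$ is non-atomic for $\mu$-a.e. $x$. The two technical points to verify carefully in the write-up are precisely the $\mathbb Z^2$-invariance of $\mathcal P^{\vec v}_\mu$ (and the resulting equivariance of the disintegration) and the compatibility of the Pinsker algebra with the ergodic decomposition of $\widetilde\mu$; both are routine, but they are exactly where the argument would break if the non-ergodicity of $\widetilde\mu$ were overlooked.
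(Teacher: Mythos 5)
Your argument is correct in outline, but it takes a genuinely different and considerably longer route than the paper. The paper never passes to the skew product for this lemma: it works directly with the $\mathbb{Z}^2$-m.p.s.\ $(X,\mathcal{B}_X,\mu,T)$, observing (as you do) that $\mathcal{P}_\mu^{\vec v}$ is invariant under the full $\mathbb{Z}^2$-action, and then invoking Rohlin's skew-product theorem (\cite[Theorem 3.18]{G}) for the ergodic measure $\mu$ over the invariant sub-$\sigma$-algebra $\mathcal{P}_\mu^{\vec v}$: either $\mu_x$ is non-atomic a.e., or $\mu_x$ is uniformly distributed on exactly $r$ points a.e.; in the latter case $\mathcal{B}_X$ is a finite extension of $\mathcal{P}_\mu^{\vec v}$, which forces $h_\mu^{\vec v}(T)=0$, a contradiction. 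Your first half reproves a weaker form of this dichotomy by hand (the largest-atom function is $\mathbb{Z}^2$-invariant, hence constant $c$), which is fine but could be replaced by a single citation of Rohlin's theorem; and your second half, which rules out $c>0$ by decomposing $\widetilde\mu$ into ergodic components, selecting one of positive entropy, and invoking the classical non-atomicity result together with the compatibility of the Pinsker algebra with the ergodic decomposition, is correct but imports two auxiliary facts (the restriction of $\mathcal{P}_{\widetilde\mu}$ to ergodic components, and the identification of the corresponding conditional measures) that the paper's route avoids entirely. Your caution about the possible non-ergodicity of $\widetilde\mu$ is well placed \emph{for your approach}, and is precisely the complication that working on the base system sidesteps: the atomic alternative is killed there by the elementary observation that a finite extension carries no directional entropy, with no ergodic decomposition needed. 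In short, both proofs are valid; the paper's is shorter because it applies the Rohlin dichotomy where the measure is actually known to be ergodic, while yours buys nothing extra for the additional machinery. If you keep your version, do write out the two ``routine'' points you flag, since they are exactly where the argument would otherwise be incomplete.
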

	\begin{proof}
		Since $\mu\in M^e(X,T)$, it follows from the proof of Rohlin's skew-product theorem (see for example \cite[Theorem 3.18]{G}) that either for $\mu$-a.e. $x\in X$, $\mu_x$ is non-atomic or there exists $r\in\mathbb{N}$ such that $\mu$-a.e. $x\in X$, $\mu_x=\frac{1}{r} \sum_{i=1}^{r} \delta_{x_{i}}$, with $x_{1}, x_{2}, \ldots, x_{r}$ distinct points of $X$. By the definition of $\mathcal{P}_\mu^{\vec{v}}$, it is $T$-invariant $\sigma$-algebra. Hence if the second case holds, then $\mathcal{P}_\mu^{\vec{v}}=\mathcal{B}_X$, which contradicts the assumption that $h_{\mu}^{\vec{v}}(T)>0$.
	\end{proof}

	Following ideas in \cite[Theorem 1.2 (1)]{HLY}, we state and prove our second application.
	\begin{thm}\label{m-thm2}
		Let $(X ,T)$ be a $\mathbb{Z}^2$-t.d.s. and $\vec{v}=(1,\beta)\in \mb{R}^2$ be a direction vector. Then for any $\mu\in M^e(X,T)$, \[\overline{Asy^{\vec{v}}_n(X, T) \cap E_{n}^{\mu,\vec{v}}(X, T)}\supset E_{n}^{\mu,\vec{v}}(X, T).\]
	\end{thm}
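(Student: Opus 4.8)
The plan is to transfer the corresponding $\mathbb{Z}$-action result from the skew product system back to the original $\mathbb{Z}^2$-system, exactly as the preceding lemmas set up. The key dictionary is Theorem \ref{thm2}, which identifies $\mathcal{P}_{\widetilde\mu}=\mathcal{C}\times\mathcal{P}_\mu^{\vec{v}}$ and gives $\widetilde\mu_{(t,x)}=\delta_t\times\mu_x$, together with Lemma \ref{lem5}, which identifies stable sets as $W^s((t,x),\T)=\{t\}\times W^{\vec v}_s(x,T)$. First I would recall that for the $\mathbb{Z}$-t.d.s. $(\widetilde X,\T)$ the analogous density statement holds: following \cite[Theorem 1.2(1)]{HLY}, one has $\overline{Asy_n(\widetilde X,\T)\cap E_n^{\widetilde\mu}(\widetilde X,\T)}=E_n^{\widetilde\mu}(\widetilde X,\T)$, where $Asy_n$ is the set of asymptotic $n$-tuples and $E_n^{\widetilde\mu}$ the set of entropy $n$-tuples for the measure $\widetilde\mu$. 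Since $h_{\widetilde\mu}(\T)=h_\mu^{\vec v}(T)>0$ by Theorem \ref{lem1}, this input is available (for $h_\mu^{\vec v}(T)=0$ the set $E_n^{\mu,\vec v}(X,T)$ is empty and there is nothing to prove).

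Next I would establish the two correspondences needed to pull the statement down to $X$. For the entropy tuples, I expect to prove that an $n$-tuple $((t_1,x_1),\dots,(t_n,x_n))$ with all $t_i$ equal to a common $t$ lies in $E_n^{\widetilde\mu}(\widetilde X,\T)$ if and only if $(x_1,\dots,x_n)\in E_n^{\mu,\vec v}(X,T)$; this should follow from the measure description via $\lambda_n^{\vec v}(\mu)$ in Theorem \ref{thm1} applied to both systems, using that $\widetilde\mu_{(t,x)}=\delta_t\times\mu_x$ forces the conditional-measure products over $\mathcal{P}_{\widetilde\mu}$ to factor through the $X$-coordinate, so that $\operatorname{supp}(\lambda_n(\widetilde\mu))$ projects onto $\operatorname{supp}(\lambda_n^{\vec v}(\mu))$ along the diagonal-in-$\O$ locus. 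For the asymptotic tuples, Lemma \ref{lem5} gives that $((t,x_1),\dots,(t,x_n))\in Asy_n(\widetilde X,\T)$ exactly when $(x_1,\dots,x_n)\in Asy_n^{\vec v}(X,T)$, because $\widetilde d$-convergence under $\T^n$ forces the $\O$-coordinates to coincide (the rotation is isometric, as used in \eqref{11}) and then reduces to $d$-convergence of the $X$-coordinates along the relevant sequence in $\Lambda^{\vec v}(b)$.

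With these dictionaries in place, I would run the argument fiberwise. Take $(x_1,\dots,x_n)\in E_n^{\mu,\vec v}(X,T)$ and a neighborhood; lift to an entropy $n$-tuple $((0,x_1),\dots,(0,x_n))\in E_n^{\widetilde\mu}(\widetilde X,\T)$ in the fiber over $t=0$. Apply the $\mathbb{Z}$-action density result to approximate it by a tuple in $Asy_n(\widetilde X,\T)\cap E_n^{\widetilde\mu}(\widetilde X,\T)$; since asymptotic tuples in $\widetilde X$ live in a single fiber $\{t\}\times X^{(n)}$, the approximating tuple has the form $((s,y_1),\dots,(s,y_n))$, and projecting to $X$ yields $(y_1,\dots,y_n)\in Asy_n^{\vec v}(X,T)\cap E_n^{\mu,\vec v}(X,T)$ close to $(x_1,\dots,x_n)$. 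The reverse inclusion is trivial, so the two closed sets coincide. The main obstacle I anticipate is the careful verification of the entropy-tuple correspondence in the fiber over a fixed $t$: I must check that the approximating entropy tuple produced by the $\mathbb{Z}$-result can be taken within (or moved into) a single fiber without leaving $E_n^{\widetilde\mu}$, which relies on the $R_u$-invariance of $\mathcal{P}_{\widetilde\mu}$ (Proposition \ref{prop5}) to translate the whole picture along $\O$ and place it over the desired $t$, together with Lemma \ref{lem7} guaranteeing the conditional measures are non-atomic so that genuinely distinct tuples are available for the density argument.
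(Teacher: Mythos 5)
Your overall strategy (push everything through the skew product $(\widetilde X,\widetilde{\mathcal B},\widetilde\mu,\T)$ and translate back via Theorem \ref{thm2} and Lemma \ref{lem5}) is in the spirit of the paper, and your two ``dictionaries'' are essentially correct: asymptotic tuples of $\T$ do live in a single fiber $\{t\}\times X^{(n)}$ and correspond to $Asy_n^{\vec v}(X,T)$, and since $\lambda_n(\widetilde\mu)=\int(\delta_t\times\mu_x)^{(n)}\,d\widetilde\mu$ one gets $\operatorname{supp}\lambda_n(\widetilde\mu)=\{((t,y_1),\dots,(t,y_n)):t\in\O,\ (y_1,\dots,y_n)\in\operatorname{supp}\lambda_n^{\vec v}(\mu)\}$, giving the entropy-tuple correspondence. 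The final ``approximate in $\widetilde X$, observe the approximant sits in one fiber, project'' step is also sound.

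The genuine gap is the input you treat as a black box: \cite[Theorem 1.2(1)]{HLY} is a statement about an \emph{ergodic} measure with positive entropy, and you never verify that $\widetilde\mu=m\times\mu$ is ergodic for $\T$. It need not be, even when $\mu\in M^e(X,T)$ and $h_\mu^{\vec v}(T)>0$. For instance, let $X=\mathbb T\times Z$ where $(Z,S,\nu)$ is a $\mathbb Z^2$-Bernoulli shift, $T^{(1,0)}=R_\beta\times S^{(1,0)}$ and $T^{(0,1)}=\mathrm{id}\times S^{(0,1)}$, with $\mu=\mathrm{Leb}\times\nu$; then $\mu$ is ergodic (weak mixing of the Bernoulli factor) and $h_\mu^{\vec v}(T)>0$, but $\T$ admits the non-ergodic factor $(t,x)\mapsto(t+\beta,x+\beta)$ on $\mathbb T^2$, so $\widetilde\mu$ is not ergodic and the cited theorem does not apply to $(\widetilde X,\T,\widetilde\mu)$. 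Ergodicity is needed in HLY precisely for the non-atomicity of the Pinsker fibers (Rohlin's skew-product theorem), so this is not a removable hypothesis in the black-box statement. The paper circumvents this by \emph{not} invoking the $\mathbb Z$-action theorem on $\widetilde X$: it transfers only the ingredients (Theorem \ref{cor1} for density of $W_s^{\vec v}(x,T)$ in $\operatorname{supp}\mu_x$, which uses Lemma \ref{lem4} on $\widetilde X$ without any ergodicity; Lemma \ref{lem7} for non-atomicity, which uses ergodicity of $\mu$ on $X$ itself; Theorem \ref{thm1} for $E_n^{\mu,\vec v}=\operatorname{supp}\lambda_n^{\vec v}(\mu)\setminus\Delta_n(X)$) and then reruns the HLY density computation entirely on $X$. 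To repair your argument you would have to reprove, not cite, the density statement for $(\widetilde X,\T,\widetilde\mu)$ using $\widetilde\mu_{(t,x)}=\delta_t\times\mu_x$ and Lemma \ref{lem7} in place of ergodicity of $\widetilde\mu$ --- at which point you are doing the paper's computation with an extra, unnecessary layer of translation.
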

	\begin{proof}
		Given $\mu\in M^e(X,T)$, if $h_{\mu}^{\vec{v}}(T)=0$, there is nothing to be proved, as $E_{n}^{\mu,\vec{v}}(X, T)=\emptyset$.
		So we suppose that $h_{\mu}^{\vec{v}}(T)>0$. Let $
		\mu=\int_{X} \mu_{x} d \mu(x)
		$
		be the disintegration of $\mu$ over the  directional Pinsker algebra $\mathcal{P}_{\mu}^{\vec{v}}(T)$. Then by Theorem \ref{cor1} and Lemma \ref{lem7}, there exists $X_{1} \in \mathcal{B}_{X}$ with $\mu\left(X_{1}\right)=1$ such that for each $x \in X_{1}$, \[\overline{W^{\vec{v}}_s(x, T) \cap \operatorname{supp}\mu_{x}}=\operatorname{supp}\mu_{x},\text{ and }\mu_x\text{ is non-atomic}.\] It is clear that $W^{\vec{v}}_s(x, T)^{(n)} \subset A s y_{n}^{\vec{v}}(X, T)$ and thus \[As y_{n}^{\vec{v}}(X, T) \cap (\operatorname{supp}\mu_{x})^{(n)} \supset W^{\vec{v}}_s(x, T)^{(n)} \cap( \operatorname{supp}\mu_{x})^{(n)} =\left(W^{\vec{v}}_s(x, T) \cap \operatorname{supp}\mu_{x}\right)^{(n)}.\] This implies that 	for each $x \in X_{1}$,
		\begin{equation}\label{15}
			\overline{A s y_{n}^{\vec{v}}(X, T) \cap \left(\operatorname{supp}\mu_{x}\right)^{(n)}}=\left(\operatorname{supp}\mu_{x}\right)^{(n)} .
		\end{equation}
		By Theorem \ref{thm1}, $E_{n}^{\mu,\vec{v}}(X, T)=\operatorname{supp}\lambda^{\vec{v}}_{n}(\mu) \setminus \Delta_{n}(X)$, and hence \[\lambda_{n}^{\vec{v}}(\mu)\left(E_{n}^{\mu,{\vec{v}}}(X, T) \cup\Delta_{n}(X)\right)=1,\] which implies that there exists $X_{2} \in \mathcal{B}_{X}$ with $\mu\left(X_{2}\right)=1$ such that for each $x \in X_{2}$,  $\mu_{x}^{(n)}\left(E_{n}^{\mu,\vec{v}}(X, T) \cup \Delta_{n}(X)\right)=1$ by the definition of $\lambda^{\vec{v}}_{n}(\mu)$. Thus, 	for each $x \in X_{2}$, $$(\operatorname{supp}\mu_{x})^{(n)}=\operatorname{supp}\mu_{x}^{(n)}\subset E_{n}^{\mu,\vec{v}}(X, T) \cup \Delta_{n}(X),$$ that is, $(\operatorname{supp}\mu_{x})^{(n)} \setminus \Delta_{n}(X) \subset E_{n}^{\mu,\vec{v}}(X, T)$. Now by \eqref{15} and the fact that $\mu_{x}$ is non-atomic, we have for each $x \in X_{1} \cap X_{2}$
		$$
		\overline{A s y_{n}^{\vec{v}}(X, T) \cap E_{n}^{\mu,\vec{v}}(X, T)} \supset \overline{{Asy}^{\vec{v}}_{n}(X, T) \cap(\left(\operatorname{supp}\mu_{x}\right)^{(n)} \setminus \Delta_{n}(X))}=\left(\operatorname{supp}\mu_{x}\right)^{(n)},
		$$
		which implies that
		$$
		\overline{Asy^{\vec{v}}_n(X, T) \cap E_{n}^{\mu,\vec{v}}(X, T)} \supset \bigcup_{x \in X_{1} \cap X_{2}} \left(\operatorname{supp}\mu_{x}\right)^{(n)} .
		$$
		Therefore,
		$$
		\lambda_{n}^{\vec{v}}(\mu)\left(\overline{Asy^{\vec{v}}_n(X, T) \cap E_{n}^{\mu,\vec{v}}(X, T)}\right) \ge \int_{X} \mu_{x}^{(n)}\left(\bigcup_{x \in X_{1} \cap X_{2}} \left(\operatorname{supp}\mu_{x}\right)^{(n)}\right) d \mu(x)=1,
		$$
		which implies that
		\begin{equation*}
			\overline{Asy^{\vec{v}}_n(X, T) \cap E_{n}^{\mu,\vec{v}}(X, T)} \supset \operatorname{supp}\lambda_{n}^{\vec{v}}(\mu) \supset E_{n}^{\mu,\vec{v}}(X, T),
		\end{equation*}
		and hence $Asy^{\vec{v}}_n(X, T) \cap E_{n}^{\mu,\vec{v}}(X, T)$ is dense in $E_{n}^{\mu,\vec{v}}(X, T)$.
	\end{proof}

	\section*{Acknowledgement}
	C. Liu was partially supported by NNSF of China (12090012, 12031019, 12090010). L. Xu was partially supported by NNSF of China (12031019, 12371197).

	\bibliography{aipsamp}% Produces the bibliography via BibTeX.
	
\end{document}